\newtheorem{theorem}{Theorem}[section]
\newtheorem{corollary}[theorem]{Corollary}
\newtheorem{proposition}[theorem]{Proposition}
\theoremstyle{definition}
\newtheorem{definition}[theorem]{Definition}
\newtheorem{example}[theorem]{Example}
\DeclareMathOperator{\Hol}{Hol}
\DeclareMathOperator{\End}{End}
\DeclareMathOperator{\Aut}{Aut}
\DeclareMathOperator{\Perm}{Perm}
\newcommand{\B}{\mathfrak{B}}
\numberwithin{equation}{section}
\begin{document}

\title[Braces and Hopf-Galois structures]{Skew left braces and isomorphism problems for Hopf-Galois structures on Galois extensions}
\author{Alan Koch and Paul J.~Truman}
\date{\today        }

\begin{abstract}
Given a finite group $ G $, we study certain regular subgroups of the group of permutations of $ G $, which occur in the classification theories of two types of algebraic objects: skew left braces with multiplicative group isomorphic to $ G $ and Hopf-Galois structures admitted by a Galois extension of fields with Galois group isomorphic to $ G $. We study the questions of when two such subgroups yield isomorphic skew left braces or Hopf-Galois structures involving isomorphic Hopf algebras. In particular, we show that in some cases the isomorphism class of the Hopf algebra giving a Hopf-Galois structure is determined by the corresponding skew left brace. We investigate these questions in the context of a variety of existing constructions in the literature. As an application of our results we classify the isomorphically distinct Hopf algebras that give Hopf-Galois structures on a Galois extension of degree $ pq $ for $ p>q $ prime numbers. 
\end{abstract}

\maketitle

\section{Introduction} \label{section_introduction}

Let $ G $ be a finite group and let $ \Perm(G) $ denote the group of permutations of $ G $. A subgroup $ N \leq \Perm(G) $ is said to be {\em regular} if $ |N|=|G| $, the action of $ N $ on $ G $ is transitive, and the stabilizer in $ N $ of every $ \sigma \in G $ is trivial (any two of these conditions guarantees the third). One example of a regular subgroup of $ \Perm(G) $ is the image of $ G $ under the left regular representation $ \lambda : G \hookrightarrow \Perm(G) $. This map also yields an action of $ G $ on $ \Perm(G) $ by $ \,^{\sigma}\pi = \lambda(\sigma)\pi\lambda(\sigma)^{-1} $, and this paper is concerned with regular subgroups $ N \leq \Perm(G) $ that are {\em stable} under this action. These subgroups are of interest because they occur in the classification theories of two types algebraic objects. 

On one hand, there is a correspondence (although not a bijection) between $ G $-stable regular subgroups of $ \Perm(G) $ and {\em skew left braces} with multiplicative group isomorphic to $ G $. Each of these yields a set theoretic solution to the Yang-Baxter equation on the underlying set $ G $, each of which extends naturally to a solution on the vector space $ K[G] $ (for a given field $ K $). Two regular $ G $-stable subgroups can correspond to isomorphic skew left braces, and so we obtain a partition of the set of regular $ G $-stable subgroups of $ \Perm(G) $. In terms of the Yang-Baxter equation, the solutions arising from isomorphic braces are equivalent up to a change of basis of $ K[G] $. 
 
On the other hand, by a theorem of Greither and Pareigis $ G $-stable regular subgroups of $ \Perm(G) $ correspond bijectively with {\em Hopf-Galois structures} admitted by a Galois extension of fields $ L/K $ with Galois group isomorphic to $ G $, each consisting of a $ K $-Hopf algebra and a certain $ K $-linear action of $ H $ on $ L $. Applications of Hopf-Galois structures include  the formulation of variants of the Galois correspondence and the study of integral module structure in extensions of local or global fields. Two distinct Hopf-Galois structures can involve isomorphic Hopf algebras (alternatively, we might view this as two distinct actions of a single Hopf algebra on $ L $); it is possible to detect when this occurs in purely group theoretic terms, and so we obtain another partition of the set of regular $ G $-stable subgroups of $ \Perm(G) $.  

In this paper we address the natural question of comparing the two notions of isomorphism discussed above via the corresponding partitions of the set of regular $ G $-stable subgroups of $ \Perm(G) $. In Section \ref{sec_HGS_SLB} we discuss the connection between skew left braces and Hopf-Galois structures in more detail, and in Section \ref{section_partinioning} we recall and reformulate existing criteria for two regular $ G $-stable subgroup of $ \Perm(G) $ to correspond to isomorphic skew left braces or to Hopf-Galois structures involving isomorphic Hopf algebras. We find that neither of these notions implies the other in general, but in Sections \ref{sec_rho_conjugate} - \ref{sec_opposites} we show that they have rich interactions with various existing constructions, including regular $ G $-stable subgroups arising from {\em abelian maps}, as studied by Childs in \cite{Ch13} and generalized by the first named author in \cite{Koc21}, and the notions of {\em opposite} Hopf-Galois structures and skew left braces, as studied by the authors in \cite{KT20}. Finally, in Section \ref{sec_brace_classifications} we undertake a detailed study of the case in which $ |G|=pq $ with $ p>q $ prime numbers. The classifications of $ G $-stable regular subgroups, skew left braces, and Hopf-Galois structures are known in this case; by applying our techniques we identify the isomorphically distinct Hopf algebras that occur.

\section{Hopf-Galois structures and Skew left braces} \label{sec_HGS_SLB}

In this section we describe the connections between Hopf-Galois structures, $ G $-stable regular subgroups of $ \Perm(G) $, and skew left braces. For more detailed summaries we refer to the reader to \cite[Section 2]{KT20} and \cite[Appendix A]{SV18}

\subsection{Hopf-Galois structures}\label{subsection_HGS}

A {\em Hopf-Galois structure} on a finite extension of fields $ L/K $ consists of a cocommutative $ K $-Hopf algebra $ H $ and an action of $ H $ on $ L $ making $ L $ into an $ H $-module algebra and such that the $K$-linear map $j:L\otimes H\to\End_K(L)$ given by $j(x\otimes h)(y)=x(h \cdot y)$ for all $ h \in H $ and all $ x,y \in L $ is bijective (see \cite[Definition 2.7]{Ch00}).

Greither and Pareigis \cite{GP87} classify the Hopf-Galois structures admitted by a finite separable extension of fields in group theoretic terms. Specializing to the case in which $ L/K $ is a Galois extension with Galois group $ G $, their theorem states that there is a bijection between Hopf-Galois structures admitted by $ L/K $ and regular $ G $-stable subgroups of $ \Perm(G) $. Given such a subgroup $ N $, the Hopf algebra giving the Hopf-Galois structure corresponding to $ N $ is $ H_{N}:=L[N]^{G} $, where $ G $ acts on $ L $ as Galois automorphisms and on $ N $ via $ \,^{\sigma}\eta = \lambda(\sigma)\eta\lambda(\sigma)^{-1} $ (the assumption that $ N $ is $ G $-stable ensures that this is indeed an action of $ G $ on $ N $). The theorem of Greither and Pareigis also specifies the action of $ H_{N} $ on $ L $, but we shall not need this information in what follows. 

\begin{example} \label{example_classical_HGS}
The image of the right regular representation $ \rho : G \rightarrow \Perm(G) $ is a regular subgroup of $\Perm(G)$, and is $ G $-stable since we have $ \,^{\sigma}\rho(\tau) =\rho(\tau)$ for all $ \sigma,\tau \in G $. The corresponding Hopf-Galois structure is given by the Hopf algebra $ K[G] $, along with its natural action on $ L $. We call this the {\em classical} Hopf-Galois structure on $ L/K $. 
\end{example}

\begin{example}\label{example_canonical_nonclassical_HGS}
The image of the left regular representation $ \lambda : G \rightarrow \Perm(G) $ is a regular subgroup of $\Perm(G)$, and is $ G $-stable since we have $ \,^{\sigma}\lambda(\tau)=\lambda(\sigma \tau \sigma^{-1})$ for all $ \sigma,\tau \in G $. If $ G $ is nonabelian then $ \lambda(G) \neq \rho(G) $, and so $ \lambda(G) $ corresponds to a different Hopf-Galois structure on $ L/K $, with Hopf algebra $ L[\lambda(G)]^{G} $. We call this the {\em canonical nonclassical} Hopf-Galois structure on $ L/K $.
\end{example}

In each of these examples the regular subgroup $ N $ is isomorphic to $ G $. However, this need not be the case:

\begin{example}\label{example_D_3_HGS}
Let $ L/K $ be a Galois extension with Galois group 
\[ G = \langle \sigma, \tau \mid \sigma^{3}=\tau^{2}=1, \; \tau \sigma \tau^{-1} = \sigma^{-1} \rangle \cong D_{3}. \]
For $ c=0,1,2 $ let $ N_{c} = \langle \lambda(\sigma),\rho(\sigma^{c}\tau) \rangle $. We find (\cite[Lemma 1]{KKTU19b} or a routine verification) that each $ N_{c} $ is a distinct cyclic regular subgroup of $ \Perm(G) $, and is also $ G $-stable: both generators of $ G $ act trivially on $ \rho(\sigma^{c}\tau) $, and we have $ \,^{\sigma}\lambda(\sigma) = \lambda(\sigma) $ and $ \,^{\tau}\lambda(\sigma)=\lambda(\sigma^{-1}) $. Thus the dihedral extension $ L/K $ admits three Hopf-Galois structures for which the corresponding regular $ G $-stable subgroup of $ \Perm(G) $ is cyclic.   
\end{example}

In Example \ref{example_D_3_HGS} we have described the cyclic groups $ N_{c} $ using two generators, following \cite{By04} and \cite{AB20}; we will continue to adopt this slightly unconventional notation in order to relate our results to the results of those papers. 

If $ N $ is a regular $ G $-stable subgroup of $ \Perm(G) $ then we refer to the isomorphism class of $ N $ as the {\em type} of the corresponding Hopf-Galois structure. For example: the previous example provides us with Hopf-Galois structures of cyclic type on a dihedral extension of degree $ 6 $. 

In \cite{By96} Byott shows that the question of determining all regular $ G $-stable subgroups of $ \Perm(G) $ that are isomorphic to a given group $ N $ is closely related to the question of determining all regular subgroups of the holomorph $\Hol(N)$ of $N$, where $ \Hol(N) = N \rtimes \Aut(N) $, that are isomorphic to $ G $; the latter is often an easier problem since $ \Hol(N) $ is a smaller group that $ \Perm(G) $. Various authors have enumerated and described the Hopf-Galois structure admitted by a Galois extension with prescribed Galois group $ G $; see for example \cite{Ko98}, \cite{By04}, \cite{Ze19}. Others have developed more general methods for creating or describing families of regular $ G $-stable subgroups of $ \Perm(G) $: see for example \cite{Ch13}, \cite{CRV16}, \cite{KT20}. We will describe some of these constructions in more detail in subsequent sections. 

\subsection{Skew left braces} \label{subsection_skew_left_braces}

A {\em skew left brace} is a triple $ \B = (B,\cdot,\circ) $ such that $ (B,\cdot) $ and $ (B,\circ) $ are finite groups whose operations satisfy the {\em brace relation}
\begin{equation} \label{eqn_brace_relation}
x \circ(y \cdot z) = (x \circ y) \cdot x^{-1} \cdot (x \circ z) \mbox{ for all } x,y,z \in B, 
\end{equation}
where $ x^{-1} $ denotes the inverse of $ x $ in the group $ (B,\cdot) $. 

A consequence of the brace relation \eqref{eqn_brace_relation} is that $ (B,\cdot) $ and $ (B,\circ) $ share the same identity element, but in general the inverse of an element $ x $ in the group $ (B,\circ) $ (denoted $ \overline{x} $) is not equal to $ x^{-1} $. 

We call the groups $ (B,\cdot) $ the {\em dot group}, and $ (B,\circ) $ the {\em circle group} of the skew left brace $ \B $ (elsewhere in the literature these are sometimes called the {\em additive group} and {\em multiplicative group} of $ \B $). To ease notation, we write $ x \cdot y = xy $ where there is no danger of confusion. For brevity, we shall henceforth refer to a skew left brace simply as a {\em brace}, but we note that in the historical development of the subject this term originally applied to skew left braces with abelian dot group. 

\begin{example} \label{example_brace_trivial}
Let $ (B,\cdot) $ be a finite group and let $ x \circ y =  x \cdot y $ for all $ x,y \in B $. Then $ \B = (B,\cdot,\circ) $ is a brace, called the {\em trivial} brace for $ (B,\cdot) $.
\end{example}

\begin{example} \label{example_brace_almost_trivial}
Let $ (B,\cdot) $ be a finite group and let $ x \circ y =  y \cdot x $ for all $ x,y \in B $. Then $ \B = (B,\cdot,\circ) $ is a brace, called the {\em almost trivial} brace for $ (B,\cdot) $.
\end{example}

In each of these examples the dot and circle groups of $ \B $ are isomorphic to each other. However, this need not be the case:

\begin{example} \label{example_brace_C6_D3}
Let $ (B,\cdot) $ be a cyclic group of order $ 6 $, presented using two generators: 
\[ (B,\cdot) = \langle x, y \mid x^{3}=y^{2}=1, \; y x y^{-1}= x \rangle, \]
and let 
\[ x^{i}y^{j} \circ x^{k}y^{\ell} = x^{i+(-1)^{j}k} y^{k + \ell}. \]
It is routine to verify that $ (B, \circ) $ is a group in which $ x $ has order $ 3 $, $ y $ has order $ 2 $, and $ y \circ x \circ \overline{y} = \overline{x} $, whence $ (B,\circ) \cong D_{3} $. Moreover, $ \B=(B,\cdot,\circ) $ is a brace. 
\end{example}

In \cite[Proposition 1.11]{GV17}, Guanrnieri and Vendramin show that, given groups $ N, G $ of the same order, braces $ \B=(B,\cdot,\circ) $ with $ (B,\cdot) \cong N $ and $ (B,\circ) \cong G $ correspond with {\em bijective $ 1 $-cocycles} $ G \rightarrow N $. In \cite[Appendix A]{SV18} this correspondence is reformulated in terms of regular subgroups of $ \Hol(N) $ that are isomorphic to $ G $. Since our focus is on $ G $-stable regular subgroups of $\Perm(G)$ rather than regular subgroups of $\Hol(N)$, we reformulate the correspondence in this framework, as follows:  

Firstly, let $ \B=(B,\cdot,\circ) $ be a brace with $ (B,\circ) \cong G $. For each $ x \in B $, the map $ \eta_{x} : B \rightarrow B $ defined by $ \eta_{x}(y) = x \cdot y $ is a permutation of $ B $, and since $ (B,\cdot) $ is a group the set $ N_{\B}=\{ \eta_{x} \mid x \in B \} $ is a regular subgroup of $ \Perm(B) $. By using the brace relation \eqref{eqn_brace_relation}, it can be shown that $ N_{\B} $ is a $ (B,\circ) $-stable subgroup of $ \Perm(B) $. Identifying $ (B,\circ) $ with $ G $, we obtain a regular $ G $-stable subgroup of $ \Perm(G) $.  

Conversely, let $ N $ be a regular $ G $-stable subgroup of $ \Perm(G) $.  The regularity of $ N $ implies that the map $ a : N \rightarrow G $ defined by $ a(\eta) = \eta[1_{G}] $ for all $ \eta \in N $ is a bijection. Define a new binary operation on $ N $ by
\[ \eta \circ \pi = a^{-1}(a(\eta)a(\pi)) \mbox{ for all } \eta, \pi \in N, \]
where the multiplication inside the brackets takes place in $ G $. Then $ (N,\circ) $ is a group isomorphic to $ G $ and, since $ N $ is a $ G $-stable subgroup of $ \Perm(G) $, the brace relation \eqref{eqn_brace_relation} is satisfied. Therefore $ \B_{N} = (N,\cdot,\circ) $ is a brace with $ (N,\circ) \cong G $. In \cite{CS21} this construction is referred to as {\em transport of structure}. 

Alternatively, we may define a new binary operation on $ G $ by
\[ \sigma \cdot \tau = a(a^{-1}(\sigma)a^{-1}(\tau)) \mbox{ for all } \sigma, \tau \in G, \]
where the multiplication inside the brackets takes place in $ N $. Then $ (G,\cdot) $ is a group isomorphic to $ N $, and $ (G,\cdot,\circ) $ is a brace, isomorphic to the brace $ \B_{N} $  constructed above via $ a^{-1} $. 

In Section \ref{section_partinioning} we will discuss precise criteria for two $ G $-stable subgroups of $ \Perm(G) $ to yield isomorphic braces. 

\begin{example}
We have seen in Example \ref{example_classical_HGS} that the image of the right regular representation $ \rho : G \rightarrow \Perm(G) $ is a regular $ G $-stable subgroup of $\Perm(G)$. The corresponding bijection $a:\rho(G)\to G$ is given by $a(\rho(\sigma))=\sigma^{-1}$, and the resulting circle operation is given by
\[\rho(\sigma)\circ \rho(\tau) = \left(\rho(\sigma)^{-1}\rho(\tau)^{-1}\right)^{-1} = \rho(\tau)\rho(\sigma).\]
Thus the subgroup $ \rho(G) $ corresponds to the almost trivial brace for $ G $ (see Example \ref{example_brace_almost_trivial}).
\end{example}

\begin{example}
We have seen in Example \ref{example_canonical_nonclassical_HGS} that the image of the left regular representation $ \rho : G \rightarrow \Perm(G) $ is a regular $ G $-stable subgroup of $\Perm(G)$. The corresponding bijection $a:\lambda(G)\to G$ is given by $a(\lambda(\sigma)) = \sigma $, and the resulting circle operation is simply $\lambda(\sigma)\circ \lambda(\tau)=\lambda(\sigma \tau)$. Therefore the corresponding brace is the trivial brace for $ G $ (see Example \ref{example_brace_trivial}). 
\end{example}

\begin{example} \label{example_D_3_brace_equivalence}
Let $ G \cong D_{3} $ as in Example \ref{example_D_3_HGS}, and consider the regular $ G $-stable subgroup $ N_{0} \leq \Perm(G) $ constructed in that example. Let $ \eta = \lambda(\sigma) $ and $ \pi = \rho(\tau) $. The corresponding bijection $ a : N_{0} \to G $ is given by $ a(\eta^{i} \pi^{j}) = \sigma^{i}\tau^{-j} = \sigma^{i}\tau^{j} $ (since $ \tau $ has order $ 2 $). The resulting circle operation is given by
\begin{eqnarray*}
\eta^{i} \pi^{j} \circ \eta^{k} \pi^{\ell} & = & a^{-1}(\sigma^{i}\tau^{j} \sigma^{k}\tau^{\ell}) \\
& = & a^{-1}(\sigma^{i} \sigma^{k(-1)^{j}} \tau^{j} \tau^{\ell}) \\
& = & \eta^{i+k(-1)^{j}} \pi^{j+\ell}.
\end{eqnarray*}
Therefore the corresponding brace is the brace constructed in Example \ref{example_brace_C6_D3}. By similar calculations it can be shown that the subgroups $ N_{1}, N_{2} $ of Example \ref{example_D_3_HGS} also correspond to this brace. We shall see a more illuminating explanation of this fact in Section \ref{sec_rho_conjugate}.
\end{example}

\section{Brace equivalence and Hopf algebra isomorphisms} \label{section_partinioning}

In our discussion of the relationship between $ G $-stable regular subgroups $ N $ of $ \Perm(G) $ and braces $ \B = (B,\cdot, \circ) $ with $ (B,\cdot) \cong N $ and $ (B,\circ) \cong G $ (Subsection \ref{subsection_skew_left_braces}) we noted that multiple such subgroups can correspond to the same brace. This is made precise in \cite[Proposition 4.3]{GV17} and \cite[Appendix A]{SV18} in terms of regular subgroups of $ \Hol(N) $ that are isomorphic to $ G $. We prefer to formulate this concept in terms of regular $ G $-stable subgroups of $ \Perm(G) $, since we feel this framework is more suitable for comparing it with the notion of Hopf algebra isomorphism mentioned in Section \ref{section_introduction}, and discussed in detail later in this section. Our approach is similar to \cite[Proposition 2.1]{Ze19}, but we give a self-contained proof for the convenience of the reader. We fix an identification of $ G $ with $ (B,\circ) $; this identifies $ \Perm(G) $ with $ \Perm(B) $, $ \Aut(G) $ with $ \Aut(B,\circ) $, and regular $ G $-stable subgroups of $ \Perm(G) $ with regular $ (B,\circ) $-stable subgroups of $ \Perm(B) $.

\begin{proposition} \label{prop_parameterization}
Let $ \B = (B,\cdot,\circ) $ be a brace and let $ N_{\B} = \{ \eta_{x} \mid x \in B \} $ be the corresponding regular $ (B,\circ) $-stable subgroup of $ \Perm(B) $. Then:
\begin{enumerate}
\item a regular $ (B,\circ) $-stable subgroup $ M $ of $ \Perm(B) $ yields a brace isomorphic to $ \B $ if and only if $ M = \varphi^{-1} N \varphi $ for some $ \varphi \in \Aut(B,\circ) $;
\item we have $ \varphi^{-1} N \varphi = N $ if and only if $ \varphi \in \Aut_{Br}(\B) $, the group of brace automorphisms of $ \B $. 
\end{enumerate}
\end{proposition}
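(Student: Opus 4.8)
The plan is to pass everything to the circle group $(B,\circ)$ — identified with $G$ as above — and to study how conjugation by an element of $\Aut(B,\circ)$ interacts with the correspondence $N\mapsto\B_N$ of Subsection~\ref{subsection_skew_left_braces}. For a regular $(B,\circ)$-stable subgroup $N$ write $a_N\colon N\to B$, $a_N(\eta)=\eta(1)$, for the associated bijection. I will use repeatedly: the identity $\eta(y)=a_N\bigl(\eta\cdot a_N^{-1}(y)\bigr)$ for $y\in B$ (write $y=\pi(1)$ with $\pi\in N$ and expand $(\eta\pi)(1)$); the fact that $a_N$ is, by its very definition, a group isomorphism $(N,\circ)\to(B,\circ)$; and the fact that for $N=N_\B$ the map $\theta\colon x\mapsto\eta_x$ is a brace isomorphism $\B\to\B_{N_\B}$ with inverse $a_{N_\B}$ (a one-line check on each operation).

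First I would record two preliminary points. (a) For $\varphi\in\Aut(B,\circ)$ and $g\in B$ one has $\varphi\lambda(g)\varphi^{-1}=\lambda(\varphi(g))$; since conjugation preserves transitivity and order, hence regularity, and since $\lambda(g)\,\varphi^{-1}N\varphi\,\lambda(g)^{-1}=\varphi^{-1}\bigl(\lambda(\varphi(g))\,N\,\lambda(\varphi(g))^{-1}\bigr)\varphi=\varphi^{-1}N\varphi$ whenever $N$ is $(B,\circ)$-stable, the subgroup $\varphi^{-1}N\varphi$ is again regular and $(B,\circ)$-stable; so the right-hand side of~(1) is a legitimate hypothesis. (b) The \emph{core claim}: for such $\varphi$ and $N$, conjugation $c_\varphi\colon\eta\mapsto\varphi^{-1}\eta\varphi$ restricts to a brace isomorphism $\B_N\xrightarrow{\ \sim\ }\B_{\varphi^{-1}N\varphi}$, and moreover $a_{\varphi^{-1}N\varphi}\bigl(c_\varphi(\eta)\bigr)=\varphi^{-1}\bigl(a_N(\eta)\bigr)$. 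The argument is short: $c_\varphi$ is a bijection respecting composition (the dot operation); the displayed equality holds because $\bigl(\varphi^{-1}\eta\varphi\bigr)(1)=\varphi^{-1}\bigl(\eta(1)\bigr)$ (as $\varphi$ fixes $1$); and combining it with $\varphi^{-1}\in\Aut(B,\circ)$ shows $c_\varphi$ respects the circle operation as well.

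With (b) available, the \emph{if} direction of~(1) is immediate: if $N'=\varphi^{-1}N_\B\varphi$ then $c_\varphi$ witnesses $\B_{N'}\cong\B_{N_\B}\cong\B$. Part~(2) I would prove directly and symmetrically. If $\varphi\in\Aut_{Br}(\B)$ then $\varphi$ preserves the dot operation, so $\bigl(\varphi^{-1}\eta_x\varphi\bigr)(y)=\varphi^{-1}\bigl(x\cdot\varphi(y)\bigr)=\varphi^{-1}(x)\cdot y=\eta_{\varphi^{-1}(x)}(y)$; hence $\varphi^{-1}N_\B\varphi=N_\B$. Conversely, if $\varphi^{-1}N_\B\varphi=N_\B$, then each $\varphi^{-1}\eta_x\varphi$ lies in $N_\B$ and sends $1$ to $\varphi^{-1}(x)$, so it equals $\eta_{\varphi^{-1}(x)}$; evaluating at $y$ and substituting $x\mapsto\varphi(x)$ yields $\varphi(x\cdot y)=\varphi(x)\cdot\varphi(y)$, so $\varphi$ preserves the dot operation and, being already in $\Aut(B,\circ)$, lies in $\Aut_{Br}(\B)$. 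This settles~(2).

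The one remaining — and, I expect, only nonformal — point is the \emph{only if} direction of~(1): given a regular $(B,\circ)$-stable $N'$ and a brace isomorphism $g\colon\B_{N_\B}\to\B_{N'}$, I must produce $\varphi\in\Aut(B,\circ)$ with $N'=\varphi^{-1}N_\B\varphi$. Guided by the intertwining identity in~(b), set $\psi:=a_{N'}\circ g\circ a_{N_\B}^{-1}\colon B\to B$; since $g$ is a circle-group isomorphism and $a_{N_\B},a_{N'}$ are the identifications of the respective circle groups with $(B,\circ)$, we get $\psi\in\Aut(B,\circ)$. One then checks that $g(\eta)=\psi\,\eta\,\psi^{-1}$ for every $\eta\in N_\B$: using $a_{N'}^{-1}=g\circ a_{N_\B}^{-1}\circ\psi^{-1}$, that $g$ respects composition, and the identity $\mu(z)=a_N\bigl(\mu\cdot a_N^{-1}(z)\bigr)$ on both $N_\B$ and $N'$, one arrives at $\bigl(g(\eta)\bigr)(y)=\psi\bigl(\eta(\psi^{-1}(y))\bigr)$ for all $y\in B$. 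It follows that $N'=g(N_\B)=\psi N_\B\psi^{-1}$, so $\varphi:=\psi^{-1}\in\Aut(B,\circ)$ does the job. This verification is the one place where the two basepoint bijections $a_{N_\B}$ and $a_{N'}$ must be tracked together, so keeping that bookkeeping straight is the real obstacle; everything else reduces to the core claim and the elementary identities above.
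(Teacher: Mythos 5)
Your proof is correct and follows essentially the same route as the paper's: your core claim (b) is exactly the paper's computation $\eta^{\varphi}_{x}=\varphi^{-1}\eta_{\varphi(x)}\varphi$, your part (2) is the same "evaluate at $y=1_{B}$" argument, and your construction of $\psi=a_{N'}\circ g\circ a_{N_{\B}}^{-1}$ in the only-if direction is the explicit, basepoint-tracking version of the paper's observation that a brace isomorphism between $(B,\star,\circ)$ and $(B,\cdot,\circ)$ is automatically an element of $\Aut(B,\circ)$ transporting $\star$ to $\cdot_{\varphi}$. The only difference is presentational: you keep the braces on the subgroups $N$, $N'$ and carry the bijections $a_{N}$ explicitly, whereas the paper transports everything to the common underlying set $B$ first.
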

\begin{proof}
\begin{enumerate}
\item First let $ \varphi \in \Aut(B,\circ) $, and define a new binary operation on $ B $ by
\begin{equation} \label{eqn_brace_dot_phi} 
x \cdot_{\varphi} y = \varphi^{-1}(\varphi(x) \cdot \varphi(y)) \mbox{ for all } x,y \in B.
\end{equation}
Then $ \B_{\varphi} = (B,\cdot_{\varphi},\circ) $ is a brace, and $ \varphi : \B \rightarrow \B_{\varphi} $ is a brace isomorphism. The regular $ (B,\circ) $-stable regular subgroup of $ \Perm(B) $ corresponding to $ B_{\varphi} $ is $ N_{\varphi} = \{ \eta^{\varphi}_{x} \mid x \in B \} $, where $ \eta^{\varphi}_{x}(y) = x \cdot_{\varphi} y $ for all $ x,y \in B $. Now we have
\begin{eqnarray*}
\eta^{\varphi}_{x}[y] & = & x \cdot_{\varphi} y \\
& = &  \varphi^{-1}(\varphi(x) \cdot \varphi(y)) \\
& = & \varphi^{-1} \eta_{\varphi(x)}[\varphi(y)] \\
& = & \left( \varphi^{-1} \eta_{\varphi(x)} \varphi \right) [y],
\end{eqnarray*}
and so $ N_{\varphi} = \varphi^{-1} N \varphi $. 

Conversely, suppose that $ M $ is a regular $ (B,\circ) $-stable subgroup of $ \Perm(B) $, let $ \mathfrak{C}= (B,\star, \circ) $ be the brace corresponding to $ M $, and suppose that $ \varphi : \mathfrak{C} \rightarrow \B $ is a brace isomorphism. Then $ \varphi \in \Aut(B,\circ) $ and 
\[\varphi(x \cdot^{\prime} y) = \varphi(x) \cdot \varphi(y), \]
so
\begin{eqnarray*}
x \star y & = & \varphi^{-1}\left( \varphi(x) \cdot \varphi(y)\right) \\
& = & x \cdot_{\varphi} y,
\end{eqnarray*}
where the binary operation $ \cdot_{\varphi} $ is defined as in Equation \eqref{eqn_brace_dot_phi}. Therefore $ M  = \varphi^{-1} N \varphi $ for some $ \varphi \in \Aut(B,\circ) $.
\item First suppose that $ \varphi \in \Aut_{Br}(\B) $. Then we have
\[ x \cdot_{\varphi} y =  \varphi^{-1}(\varphi(x) \cdot \varphi(y)) = x \cdot y \mbox{ for all } x,y \in B, \]
so $ \eta^{\varphi}_{x}  = \eta_{x} $ for all $ x \in B $, and so $ N_{\varphi} = N $. 

Conversely, suppose that $ N_{\varphi} = N $. Then for all $ x \in B $ there exists $ x^{\prime} \in B $ such that $ \eta^{\varphi}_{x} = \eta_{x^{\prime}} $. That is:
\[ x \cdot_{\varphi} y = x^{\prime} \cdot y \mbox{ for all } y \in B. \]
Setting $ y = 1_{B} $ we obtain $ x=x^{\prime} $ immediately. Therefore
\[ \varphi(x) \cdot \varphi(y) = \varphi(x \cdot y) \mbox{ for all } x,y \in B, \]
and so $ \varphi \in \Aut_{Br}(\B) $. 
\end{enumerate}
\end{proof}

As a corollary, we recover \cite[Corollary 2.4]{Ze19}:

\begin{corollary}
A given brace $ \B = (B,\cdot,\circ) $ yields
\[ \frac{ | \Aut(B,\circ) | }{ |\Aut _{Br}(\B)| } \]
distinct regular $ (B,\circ) $-stable subgroups of $ \Perm(B) $.
\end{corollary}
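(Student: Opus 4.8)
The plan is to deduce the count from Proposition~\ref{prop_parameterization} by a straightforward orbit--stabiliser argument. Let $ S $ denote the set of all regular $ (B,\circ) $-stable subgroups of $ \Perm(B,\circ) $. First I would check that $ \Aut(B,\circ) $ acts on $ S $ by the rule $ \varphi \ast N^{\prime} = \varphi^{-1} N^{\prime} \varphi $: conjugation by $ \varphi \in \Aut(B,\circ) $ sends $ \Perm(B,\circ) $ to itself and carries regular subgroups to regular subgroups, and using the identity $ \varphi^{-1}\lambda(g)\varphi = \lambda(\varphi^{-1}(g)) $ one verifies that it also preserves $ (B,\circ) $-stability, so $ \varphi \ast N^{\prime} \in S $; checking that this is a (right) action is immediate. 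In fact the statement $ N_{\varphi} = \varphi^{-1} N \varphi \in S $ is already established inside the proof of Proposition~\ref{prop_parameterization}, so essentially no extra work is needed at this stage.

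Next I would specialise the two parts of Proposition~\ref{prop_parameterization} to $ N = N_{\B} $. Part~(1) says precisely that the orbit of $ N_{\B} $ under $ \ast $ consists of exactly those $ N^{\prime} \in S $ that yield a brace isomorphic to $ \B $; this is the collection of subgroups that ``$ \B $ yields.'' Part~(2) identifies the stabiliser of $ N_{\B} $ under $ \ast $ with $ \Aut_{Br}(\B) $. I would also note, so that the quotient in the statement is meaningful, that $ \Aut_{Br}(\B) $ is genuinely a subgroup of $ \Aut(B,\circ) $, since a brace automorphism is in particular an automorphism of the circle group.

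Finally, the orbit--stabiliser theorem gives that the number of distinct subgroups in the orbit of $ N_{\B} $ equals $ [\,\Aut(B,\circ) : \Stab(N_{\B})\,] = |\Aut(B,\circ)| / |\Aut_{Br}(\B)| $, which is the asserted count. I do not anticipate any real obstacle: the only point requiring a little care is confirming that $ \ast $ restricts to a well-defined action on $ S $ itself (not merely on the larger set of all regular subgroups of $ \Perm(B,\circ) $), and even that is essentially contained in the proof of Proposition~\ref{prop_parameterization}.
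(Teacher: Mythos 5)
Your argument is correct and is exactly the one the paper intends: the corollary is stated without proof, but the surrounding text makes clear that it follows from Proposition \ref{prop_parameterization} by orbit--stabiliser, with part (1) identifying the orbit of $N_{\B}$ under conjugation by $\Aut(B,\circ)$ and part (2) identifying the stabiliser with $\Aut_{Br}(\B)$. Your additional checks (that conjugation preserves regularity and $(B,\circ)$-stability, and that $\Aut_{Br}(\B)$ is a subgroup of $\Aut(B,\circ)$) are sensible and harmless.
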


We now return to our original formulation, and consider regular $ G $-stable subgroups of $ \Perm(G) $. 

\begin{definition}
We say that two regular, $G$-stable subgroups $N,M$ of $ \Perm(G) $ are {\em brace equivalent} if they yield isomorphic braces (i.e., braces between which there is a bijection respecting the dot and circle operations). 
\end{definition}

Brace equivalence is an equivalence relation, so we have the notion of a {\em brace class} of regular, $G$-stable subgroups, and the brace classes partition the set of regular $ G $-stable subgroups of $ \Perm(G) $. By Proposition \ref{prop_parameterization}, the brace class of a regular $ G $-stable subgroup $ N $ of $ \Perm(G) $ is $ \{ \varphi^{-1} N \varphi \mid \varphi \in \Aut(G) \} $, and this brace class has size $ | \Aut(G) | / |\Aut_{Br}(\B)| $, where $ \B $ is the brace corresponding to $ N $. 
 
  \begin{example}\label{example_brace_class_lambda_G}
 	Let $G$ be a finite group, and let $N=\lambda(G)$ as in Example \ref{example_brace_trivial}, thereby giving rise to the trivial brace. Any automorphism of $(\lambda(G),\circ)$ will also preserve $\cdot$, hence the brace class containing $\lambda(G)$ is precisely $\{\lambda(G)\}$.
 \end{example} 
 
 \begin{example}\label{example_brace_class_rho_G}
 	Let $G$ be a finite group, and let $N=\rho(G)$ as in Example \ref{example_brace_almost_trivial}, thereby giving rise to the almost trivial brace. Any automorphism of $(\rho(G),\circ)$ will also preserve $\cdot$, hence the brace class containing $\rho(G)$ is precisely $\{\rho(G)\}$.
 \end{example}
 
Now we turn to another natural partition of the set of $ G $-stable regular subgroups of $ \Perm(G) $. Recall from Subsection \ref{subsection_HGS} that each regular $ G $-stable subgroup $ N $ of $ \Perm(G) $ corresponds to a Hopf-Galois structure on a Galois extension of fields $ L/K $ with Galois group $ G $, and that it is possible for two distinct Hopf-Galois structures to involve isomorphic Hopf algebras; this phenomenon has recently been studied in papers such as \cite{KKTU19a}, \cite{KKTU19b}, and \cite{TT19}. In particular, \cite[Theorem 2.2]{KKTU19b} shows that if $ N, M $ are regular $ G $-stable subgroups of $ \Perm(G) $ then $ H_{N} \cong H_{M} $ as Hopf algebras if and only if there is an isomorphism $ \theta : N \rightarrow M $ such that $ \,^{\sigma}\theta(\eta) = \theta(^{\sigma}\eta) $ for all $ \eta \in N $ and $ \sigma \in G $. In this case we say that $ N, M $ are {\em $ G $-isomorphic}. Clearly $ G $-isomorphism is an equivalence relation on the set of regular $ G $-stable subgroups of $ \Perm(G) $, and so we obtain a second partition of this set. 

It is also possible to detect Hopf algebra isomorphisms via regular subgroups of $ \Hol(N) $ that are isomorphic to $ G $: see \cite[Theorem 2.11]{KKTU19b}. However, we feel that this concept is more transparent in the $ \Perm(G) $ setting, and will continue to employ this point of view. 
 
It is natural to ask whether there is a connection between brace equivalence and $ G $-isomorphism of regular $ G $-stable subgroups. Our first observations are that neither implies the other in general:

\begin{example}{\bf (Brace equivalence does not imply $ G $-isomorphism) } \label{example_brace_eq_vs_G_iso}
Let $ L/K $ be a Galois extension with Galois group 
\[ G = \langle \sigma, \tau \mid \sigma^{4}=1, \tau^{2}=\sigma^{2}, \tau \sigma \tau^{-1}= \sigma^{-1} \rangle \cong Q_{8}. \]
It is known \cite[Lemma 2.5]{TT19}  that $ L/K $ admits $ 6 $ Hopf-Galois structures of dihedral type. The corresponding regular subgroups of $ \Perm(G) $ are
\[ D_{s,\lambda} = \langle \lambda(s), \lambda(t)\rho(s) \rangle \quad \mbox{ and } \quad D_{s,\rho} = \langle \rho(s), \lambda(s)\rho(t) \rangle, \]
where in each case $ s,t $ are distinct elements of the set $ \{ \sigma, \tau, \sigma\tau \} $, and the choice of $ t $ does not affect the definition of the subgroups. It is also known \cite[Lemma 3.5]{TT19} that the subgroups described above are pairwise non $ G $-isomorphic. 
\\ \\
We can use Proposition \ref{prop_parameterization} to show that the subgroups $ D_{s,\rho} $ are all brace equivalent. For $ \varphi \in \Aut(G) $ and $ g \in G $ we have
\begin{eqnarray*}
\varphi^{-1} \rho(\sigma) \varphi [g] & = & \varphi^{-1} [ \varphi(g) \sigma^{-1} ] \\
& = & g \varphi(\sigma)^{-1} \\
& = & \rho(\varphi(\sigma)) [g],
\end{eqnarray*}
so $ \varphi^{-1} \rho(\sigma) \varphi = \rho(\varphi(s)) $. Similarly, $ \varphi^{-1} \lambda(\sigma)\rho(\tau) \varphi = \lambda(\varphi(\sigma))\rho(\varphi(\tau))$, and so $ \varphi^{-1} D_{\sigma,\rho} \varphi = D_{\varphi(\sigma),\rho} $. Since there exist automorphisms of $ G $ that send $ \sigma $ to each of $ \sigma, \tau, \sigma\tau $, Proposition \ref{prop_parameterization}  implies that the subgroups $ D_{s,\rho} $ are all brace equivalent. 
\end{example}


This example also shows that the subgroups $ D_{s,\rho} $ exhaust their brace class. Similarly, the subgroups $D_{s,\lambda}$ are brace equivalent, and form a second brace class. We could prove this by the methods employed above, but we shall see a more illuminating proof in Section \ref{sec_opposites}.

\begin{example}{\bf ($ G $-isomorphism does not imply brace equivalence)} \label{example_G_iso_vs_Brace_eq}
Let $ L/K $ be a Galois extension with Galois group
\[ G = \langle \sigma, \tau \mid \sigma^{4}=\tau^{2}=1, \; \tau \sigma \tau^{-1} = \sigma^{-1} \rangle \cong D_{4}. \] 
Let $ \eta = \lambda(\sigma)\rho(\tau) $ and $ \pi = \lambda(\tau) $, and let $ N = \langle \eta,\pi \rangle \subseteq \Perm(G) $. Using the fact that the elements of $ \lambda(G) $ and $ \rho(G) $ commute inside $ \Perm(G) $, we see that $ N \cong G $ and that $ N $ acts regularly on $ G $. In fact, $ N $ is $ G $-isomorphic to $ \lambda(G) $: the map $ \theta : \lambda(G) \rightarrow N $ defined by
\[ \theta( \lambda(\sigma) ) = \eta, \quad \theta( \lambda(\tau) ) = \pi \]
is a $ G $-isomorphism. Therefore $ N $ corresponds to a Hopf-Galois structure on $ L/K $ whose Hopf algebra $ H_{N} $ is isomorphic to $ H_{\lambda} $. However, we have already observed that the brace class of the regular $ G $-stable subgroup $ \lambda(G) $ contains only one element, so $ N $ cannot be brace equivalent to $ \lambda(G) $. 	
\end{example}

However, if two regular $ G $-stable subgroups are $ G $-isomorphic then their elements of their respective brace classes can be arranged into $ G $-isomorphic pairs:

\begin{proposition}
Suppose that $ N,M $ are $ G $-isomorphic regular $ G $-stable subgroups of $ \Perm(G) $. Then, for $ \varphi \in \Aut(G) $, $ N_{\varphi}, M_{\varphi} $ are $ G $-isomorphic. 
\end{proposition}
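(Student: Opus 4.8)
The plan is to use the explicit description of the $\varphi$-twist of a subgroup furnished by the proof of Proposition~\ref{prop_parameterization}, namely that $N_{i,\varphi} = \varphi^{-1} N_i \varphi$ (thinking of $\varphi \in \Aut(G)$ acting via conjugation on $\Perm(G)$), and then to transport the given $G$-isomorphism $\theta : N_1 \to N_2$ across this conjugation. Concretely, I would define $\theta_\varphi : N_{1,\varphi} \to N_{2,\varphi}$ by
\[ \theta_\varphi(\eta) = \varphi^{-1}\,\theta(\varphi \eta \varphi^{-1})\,\varphi \quad \text{for all } \eta \in N_{1,\varphi}. \]
First I would check this is well-defined: if $\eta \in N_{1,\varphi} = \varphi^{-1} N_1 \varphi$ then $\varphi \eta \varphi^{-1} \in N_1$, so $\theta(\varphi\eta\varphi^{-1}) \in N_2$, so $\theta_\varphi(\eta) \in \varphi^{-1} N_2 \varphi = N_{2,\varphi}$. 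It is a group isomorphism because it is the composite of three isomorphisms (conjugation by $\varphi$, then $\theta$, then conjugation by $\varphi^{-1}$), and in particular it is a bijection onto $N_{2,\varphi}$.

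The substantive point is to verify that $\theta_\varphi$ respects the $G$-action, i.e.\ that $\,^{g}\theta_\varphi(\eta) = \theta_\varphi(\,^{g}\eta)$ for all $g \in G$ and $\eta \in N_{1,\varphi}$, where $\,^{g}(-) = \lambda(g)(-)\lambda(g)^{-1}$. The key algebraic fact I would isolate first is how conjugation by $\varphi$ interacts with the $G$-action: for $\varphi \in \Aut(G)$ one has $\varphi \lambda(g) \varphi^{-1} = \lambda(\varphi(g))$ as permutations of $G$ (this is the standard identity $\varphi(gh) = \varphi(g)\varphi(h)$ read as an equality of permutations), hence for any $\pi \in \Perm(G)$,
\[ \varphi \,(\,^{g}\pi)\, \varphi^{-1} = \varphi \lambda(g) \pi \lambda(g)^{-1} \varphi^{-1} = \lambda(\varphi(g)) \,(\varphi \pi \varphi^{-1})\, \lambda(\varphi(g))^{-1} = \,^{\varphi(g)}(\varphi \pi \varphi^{-1}). \]
With this in hand the verification is a short chain: $\theta_\varphi(\,^{g}\eta) = \varphi^{-1}\theta(\varphi\,(\,^{g}\eta)\,\varphi^{-1})\varphi = \varphi^{-1}\theta(\,^{\varphi(g)}(\varphi\eta\varphi^{-1}))\varphi$, then apply the $G$-equivariance of $\theta$ (available since $N_1, N_2$ are $G$-isomorphic) to pull the $\,^{\varphi(g)}$ outside $\theta$, obtaining $\varphi^{-1}\,(\,^{\varphi(g)}\theta(\varphi\eta\varphi^{-1}))\,\varphi$, and finally apply the conjugation identity in the reverse direction (with $\varphi^{-1}$ in place of $\varphi$, or equivalently rearranging the same computation) to recognize this as $\,^{g}\bigl(\varphi^{-1}\theta(\varphi\eta\varphi^{-1})\varphi\bigr) = \,^{g}\theta_\varphi(\eta)$.

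I would then conclude, by the criterion of \cite[Theorem 2.2]{KKTU19b} quoted in the excerpt (equivalently, by the definition of $G$-isomorphism), that $N_{1,\varphi}$ and $N_{2,\varphi}$ are $G$-isomorphic. The main obstacle, such as it is, is purely bookkeeping: keeping straight which direction each conjugation goes and making sure the conjugation-versus-$G$-action identity is applied correctly on both ends so that the stray $\varphi(g)$ becomes $g$ again at the finish. There is no genuine difficulty here — once the identity $\varphi\lambda(g)\varphi^{-1} = \lambda(\varphi(g))$ is recorded, everything is forced, and one could alternatively phrase the whole argument as the observation that conjugation by $\varphi$ is an isomorphism of ``$G$-sets of permutations'' that intertwines the $G$-action with the $\varphi$-twisted one, so it carries $G$-equivariant isomorphisms to $G$-equivariant isomorphisms.
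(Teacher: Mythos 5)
Your proposal is correct and is essentially identical to the paper's proof: the paper defines $\theta_\varphi(\varphi^{-1}\eta\varphi) = \varphi^{-1}\theta(\eta)\varphi$, which is your map after the substitution $\eta = \varphi\mu\varphi^{-1}$, and verifies $G$-equivariance via the same identity $\,^{\sigma}(\varphi^{-1}\eta\varphi) = \varphi^{-1}(\,^{\varphi(\sigma)}\eta)\varphi$ that you isolate as $\varphi\lambda(g)\varphi^{-1} = \lambda(\varphi(g))$. Nothing further is needed.
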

\begin{proof}
Let $ \theta : N \rightarrow M $ be a $ G $-isomorphism. Define $ \theta_{\varphi} : N_{\varphi} \rightarrow M_{\varphi} $ by
\[ \theta_{\varphi} ( \varphi^{-1} \eta \varphi ) = \varphi^{-1} \theta(\eta) \varphi. \]
Then $ \theta_{\varphi} $ is an isomorphism and for $ \sigma \in G $ we have
\begin{eqnarray*}
\theta_{\varphi} \left( \,^{\sigma} \left( \varphi^{-1} \eta \varphi \right) \right) & = & \theta_{\varphi} ( \varphi^{-1} \left( \,^{\varphi(\sigma)} \eta \right) \varphi ) \\
& = &  \varphi^{-1} \theta \left( \,^{\varphi(\sigma)} \eta \right) \varphi \\
& = &  \varphi^{-1}  \,^{\varphi(\sigma)} \theta \left( \eta \right) \varphi \\
& = &  \,^{\sigma} \left( \varphi^{-1}   \theta \left( \eta \right) \varphi \right) \\
& = &  \,^{\sigma} \theta_{\varphi} \left( \varphi^{-1} \eta \varphi \right).
\end{eqnarray*}
Hence $ N_{\varphi}, M_{\varphi} $ are $ G $-isomorphic. 
\end{proof}

\section{Inner automorphisms and $ \rho $-conjugate subgroups} \label{sec_rho_conjugate}

In this section we assume that $ G $ is nonabelian and explore the consequences of conjugating a $ G $-stable regular subgroup $ N $ of $ \Perm(G) $ by an inner automorphism of $ G $. 

\begin{proposition} \label{prop_inner_rho_conjugate}
Let $ G $ be a nonabelian group and let $ N $ be a regular $ G $-stable subgroup of $ \Perm(G) $. Let $ \sigma \in G $, and let $ C(\sigma) $ denote the inner automorphism of $ G $ arising from $ \sigma $. Then:
\begin{enumerate}
\item $ C(\sigma)N C(\sigma)^{-1}= \rho(\sigma) N \rho(\sigma)^{-1} $, where $ \rho : G \rightarrow \Perm(G) $ is the the right regular representation of $ G $;
\item the subgroups $ N $ and $ C(\sigma)N C(\sigma)^{-1}$ are $ G $-isomorphic. 
\end{enumerate}
\end{proposition}
\begin{proof}
\begin{enumerate}
\item We may write $ C(\sigma)= \rho(\sigma) \lambda(\sigma) $, and so 
\[ C(\sigma)N C(\sigma)^{-1}= \rho(\sigma) \lambda(\sigma) N \lambda(\sigma)^{-1} \rho(\sigma)^{-1}. \]
But $ N $ is $ G $-stable, so $ \lambda(\sigma) N \lambda(\sigma)^{-1} = N $, and so
\[ C(\sigma)N C(\sigma)^{-1}= \rho(\sigma) N \rho(\sigma)^{-1}, \]
as claimed. 
\item Consider the isomorphism $ \theta : N \rightarrow \rho(\sigma) N \rho(\sigma)^{-1} $ defined by $ \theta(\eta) = \rho(\sigma) \eta \rho(\sigma)^{-1} $ for all $ \eta \in N $. Using the fact that elements of $ \lambda(G) $ and $ \rho(G) $ commute inside $ \Perm(G) $ we have $  \theta(^{\tau}\eta)=\,^{\tau} \theta(\eta) $ for all $ \eta \in N $ and $ \tau \in G $, so $ \theta $ is a $ G $-isomorphism. 
\end{enumerate}
\end{proof}

We shall say that two regular $ G $-stable subgroups $ N,M $ of $ \Perm(G) $ are {\em $ \rho $-conjugate} if $ M = \rho(\sigma) N \rho(\sigma)^{-1} $ for some $ \sigma \in G $. This concept also appears in \cite[Example 2.7]{KKTU19a}. We record some immediate corollaries of Proposition \ref{prop_inner_rho_conjugate}:

\begin{corollary} \label{cor_rho_brace_equivalent}
If $ N,M $ are $ \rho $-conjugate regular $ G $-stable subgroups of $ \Perm(G) $ then they are brace equivalent. 
\end{corollary}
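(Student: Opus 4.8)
The plan is to deduce this directly from the two results that precede it. By the proposition immediately above, $\rho$-conjugation coincides with conjugation by an inner automorphism of $G$: if $M = \rho(\sigma) N \rho(\sigma^{-1})$ for some $\sigma \in G$, then $M = \gamma_{\sigma} N \gamma_{\sigma}^{-1}$, where $\gamma_{\sigma} \in \Aut(G)$ is the inner automorphism determined by $\sigma$. In particular $M$ is obtained from $N$ by conjugation inside $\Perm(G)$ by an element of $\Aut(G)$, which is the hypothesis appearing in Proposition~\ref{prop_parameterization}(1).

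Next I would rewrite this in exactly the form demanded by that proposition. Setting $\varphi = \gamma_{\sigma}^{-1} = \gamma_{\sigma^{-1}} \in \Aut(G)$, we get $\varphi^{-1} N \varphi = \gamma_{\sigma} N \gamma_{\sigma}^{-1} = M$. Transporting everything through the fixed identification of $G$ with the circle group $(B,\circ)$ of the brace $\B$ attached to $N$, part~(1) of Proposition~\ref{prop_parameterization} states precisely that a regular $(B,\circ)$-stable subgroup of the shape $\varphi^{-1} N \varphi$ with $\varphi \in \Aut(B,\circ)$ yields a brace isomorphic to $\B$. Hence $M$ and $N$ give isomorphic braces, i.e.\ they are brace equivalent.

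There is essentially no obstacle here: all the substantive work has already been carried out in Proposition~\ref{prop_parameterization} and in the preceding proposition identifying $\rho(\sigma) N \rho(\sigma^{-1})$ with $\gamma_{\sigma} N \gamma_{\sigma}^{-1}$. The only point needing a moment's care is the direction of conjugation --- matching ``$\gamma_{\sigma} N \gamma_{\sigma}^{-1}$'' with ``$\varphi^{-1} N \varphi$'' --- which is handled by the harmless substitution $\varphi = \gamma_{\sigma^{-1}}$, using that $\Aut(G)$ is closed under inversion.
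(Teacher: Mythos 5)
Your argument is correct and is exactly the intended one: the paper leaves this corollary without an explicit proof because it follows immediately from the preceding proposition (identifying $\rho(\sigma) N \rho(\sigma)^{-1}$ with $\gamma_{\sigma} N \gamma_{\sigma}^{-1}$) together with Proposition~\ref{prop_parameterization}(1). Your care with the direction of conjugation, via $\varphi = \gamma_{\sigma^{-1}}$, is fine and harmless since $\Aut(G)$ is closed under inversion.
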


\begin{corollary}
If  $ G $ has only inner automorphisms then, for regular $ G $-stable subgroups of $ \Perm(G) $, brace equivalence implies $ G $-isomorphism. 
\end{corollary}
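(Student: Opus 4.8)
The plan is to read the corollary off the three statements already established in this section, so the proof will be short. Suppose $M$ and $N$ are brace-equivalent regular $G$-stable subgroups of $\Perm(G)$. By Proposition~\ref{prop_parameterization}(1), in the form recorded immediately after the definition of brace equivalence, there is an automorphism $\varphi \in \Aut(G)$ with $M = \varphi^{-1} N \varphi$.

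Next I would bring in the hypothesis. Since $G$ has only inner automorphisms, we may write $\varphi = \gamma_{\sigma}$ for some $\sigma \in G$; as $\gamma_{\sigma}^{-1} = \gamma_{\sigma^{-1}}$, this gives $M = \gamma_{\sigma^{-1}}\, N\, \gamma_{\sigma^{-1}}^{-1}$. Applying the Proposition immediately preceding Corollary~\ref{cor_rho_brace_equivalent} with $\sigma$ replaced by $\sigma^{-1}$, we obtain $M = \rho(\sigma^{-1})\, N\, \rho(\sigma^{-1})^{-1}$, so $M$ and $N$ are $\rho$-conjugate. Finally, Proposition~\ref{prop_rho_isomorphism} shows that $\rho$-conjugate regular $G$-stable subgroups are $G$-isomorphic, which is the claim.

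There is no genuine obstacle here; the corollary is a formal consequence of Proposition~\ref{prop_parameterization}, Proposition~\ref{prop_rho_isomorphism}, and the intervening identification $\rho(\sigma) N \rho(\sigma^{-1}) = \gamma_{\sigma} N \gamma_{\sigma}^{-1}$. The only points that need care are bookkeeping: tracking the inverse when passing from $\varphi$ to $\varphi^{-1}$, and matching the paper's convention $\gamma_{\sigma} = \rho(\sigma)\lambda(\sigma)$ so that conjugation on the subgroup side is actually realized by $\eta \mapsto \rho(\sigma)\,\eta\,\rho(\sigma)^{-1}$ — which works precisely because $N$ is $G$-stable, so the stray $\lambda(\sigma)$-factor is absorbed. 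One could equally phrase the entire argument in one line: brace equivalence places $M$ in the $\Aut(G)$-conjugacy orbit of $N$; the hypothesis collapses that orbit to a $\rho$-conjugacy orbit; and $\rho$-conjugacy implies $G$-isomorphism by Proposition~\ref{prop_rho_isomorphism}.
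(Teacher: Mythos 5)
Your proof is correct and is exactly the argument the paper intends: brace equivalence gives $M=\varphi^{-1}N\varphi$ by Proposition~3.1, the hypothesis lets you write $\varphi$ as an inner automorphism, the identity $\gamma_{\sigma}N\gamma_{\sigma}^{-1}=\rho(\sigma)N\rho(\sigma)^{-1}$ converts this to $\rho$-conjugacy, and Proposition~4.1 upgrades that to a $G$-isomorphism. Your bookkeeping of the inverses and your remark that $G$-stability is what absorbs the $\lambda(\sigma)$ factor are both accurate.
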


\begin{example}
Let $ G \cong D_{3} $ as in Example \ref{example_D_3_HGS}, and consider the regular $ G $-stable subgroups  $ N_{c} = \langle \lambda(\sigma),\rho(\sigma^{c}\tau) \rangle $ of $ \Perm(G) $ constructed there. It is not hard to see that these subgroups are $ \rho $-conjugate, which implies that they are brace equivalent, as stated at the end of Example \ref{example_D_3_brace_equivalence} and $ G $-isomorphic. 
\end{example}

\section{Abelian endomorphisms} \label{sec_fpf}

An endomorphism $ \psi : G \rightarrow G $ is called {\em abelian} if $ \psi(\sigma \tau) = \psi(\tau \sigma) $ for all $ \sigma,\tau \in G $, and {\em fixed-point-free} if $ \psi(\sigma)=\sigma $ only when $ \sigma=1_{G} $. In \cite{Ch13} Childs shows that, given a Galois extension of fields $ L/K $ with nonabelian Galois group $ G $, abelian fixed-point-free endomorphisms can be used to construct families of regular $ G $-stable subgroups of $ \Perm(G) $ that are isomorphic to $ G $. In \cite{Koc21} the first named author generalizes this construction by removing the fixed-point-free hypothesis; a consequence of this is that resulting subgroups are not necessarily isomorphic to $ G $. In this section we study the braces corresponding to subgroups arising from abelian endomorphisms.


First we summarize the results of \cite{Koc21}. Suppose that $ \psi : G \rightarrow G $ is an abelian endomorphism, and define a map $ \alpha_{\psi} : G \rightarrow \Perm(G) $ by $ \alpha_{\psi}(\sigma) = \lambda(\sigma) C(\psi(\sigma^{-1})) $ for all $ \sigma \in G $, where $ C(\psi(\sigma)) $ denotes the inner automorphism arising from $ \psi(\sigma) $. It is easy to see that $ \alpha_{\psi} $ is a homomorphism, so that $ N_{\psi} = \alpha_{\psi}(G) $ is a subgroup of $ \Perm(G) $, and it can be shown that it is regular and $ G $-stable \cite[Theorem 3.1]{Koc21}. 


Now we study the relationships between braces corresponding to regular $ G $-stable subgroups arising via this construction. Write $ \mathrm{Ab}(G) $ for the set of abelian endomorphisms of $ G $. 

\begin{proposition}
If $ \psi \in \mathrm{Ab}(G) $ and $ \varphi \in \Aut(G) $ then $ \varphi^{-1}\psi \varphi \in \mathrm{Ab}(G) $.
\end{proposition}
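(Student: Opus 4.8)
The plan is to verify directly that $\varphi^{-1}\psi\varphi$ is both abelian and fixed-point-free, since conjugation by an automorphism visibly preserves the property of being an endomorphism. This is a short computation, so I will simply carry it out rather than invoke any machinery.

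First I would check the abelian condition. For $\sigma,\tau \in G$, I want $(\varphi^{-1}\psi\varphi)(\sigma\tau) = (\varphi^{-1}\psi\varphi)(\tau\sigma)$. Since $\varphi$ is a homomorphism, $\varphi(\sigma\tau) = \varphi(\sigma)\varphi(\tau)$ and $\varphi(\tau\sigma) = \varphi(\tau)\varphi(\sigma)$. Applying $\psi$, the abelian property of $\psi$ gives $\psi(\varphi(\sigma)\varphi(\tau)) = \psi(\varphi(\tau)\varphi(\sigma))$, and then applying the homomorphism $\varphi^{-1}$ to both sides yields the desired equality. So $\varphi^{-1}\psi\varphi$ is abelian.

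Next I would check that $\varphi^{-1}\psi\varphi$ is fixed-point-free. Suppose $(\varphi^{-1}\psi\varphi)(\sigma) = \sigma$ for some $\sigma \in G$. Applying $\varphi$ to both sides gives $\psi(\varphi(\sigma)) = \varphi(\sigma)$. Since $\psi$ is fixed-point-free, this forces $\varphi(\sigma) = 1_G$, and since $\varphi$ is injective, $\sigma = 1_G$. Hence $\varphi^{-1}\psi\varphi$ is fixed-point-free, and therefore lies in $\mathcal{F}(G)$.

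There is no real obstacle here; the only thing to be mildly careful about is bookkeeping the direction of conjugation and the fact that $\varphi^{-1}$ is also an automorphism (so it is a homomorphism and injective), which is what makes both steps go through cleanly. I expect the entire argument to be three or four lines.
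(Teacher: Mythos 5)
Your proof is correct and follows essentially the same route as the paper: a direct verification that conjugation by $\varphi$ preserves both the abelian property (via $\varphi(\sigma\tau)=\varphi(\sigma)\varphi(\tau)$ and the abelian hypothesis on $\psi$) and the fixed-point-free property (via injectivity of $\varphi$). No gaps.
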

\begin{proof}
It is clear that $ \varphi^{-1}\psi \varphi $ is an endomorphism of $ G $; we need to show that it is abelian. For $ \sigma,\tau \in G $ we have
\begin{eqnarray*}
\varphi^{-1}\psi \varphi (\sigma\tau) & = & \varphi^{-1}\psi (\varphi(\sigma) \varphi(\tau)) \mbox{ ($\varphi $ is an automorphism) } \\
& = & \varphi^{-1}\psi (\varphi(\tau)\varphi(\sigma)) \mbox{ ($\psi $ is abelian) } \\
& = & \varphi^{-1}\psi \varphi(\tau\sigma).
\end{eqnarray*}
Therefore $ \varphi^{-1}\psi \varphi $ is abelian, and hence $ \varphi^{-1}\psi \varphi \in \mathrm{Ab}(G) $. 
\end{proof}

\begin{proposition} \label{prop_fpf_subgroups}
If $ \psi \in \mathrm{Ab}(G) $ and $ \varphi \in \Aut(G) $ then $ N_{\varphi^{-1}\psi\varphi} = \varphi^{-1} N_{\psi} \varphi $. 
\end{proposition}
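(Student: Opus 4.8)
The plan is to compute both sides as explicit subsets of $\Perm(G)$ and check that they coincide elementwise. Recall that $N_{\psi}$ is the image of the homomorphism $\alpha_{\psi}:G\to\Perm(G)$, $\alpha_{\psi}(\sigma)=\lambda(\sigma)\rho(\psi(\sigma))$, so that
\[ N_{\varphi^{-1}\psi\varphi} = \{\, \lambda(\sigma)\rho\bigl((\varphi^{-1}\psi\varphi)(\sigma)\bigr) \mid \sigma\in G \,\}, \]
while a typical element of $\varphi^{-1}N_{\psi}\varphi$ has the form $\varphi^{-1}\lambda(\tau)\rho(\psi(\tau))\varphi$ for $\tau\in G$. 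Note also that $\varphi^{-1}\psi\varphi\in\mathcal{F}(G)$ by the preceding proposition, so the left-hand side is defined.

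First I would record the two conjugation identities that drive the argument: viewing $\varphi\in\Aut(G)$ as an element of $\Perm(G)$, one has
\[ \varphi^{-1}\lambda(\tau)\varphi = \lambda\bigl(\varphi^{-1}(\tau)\bigr) \qquad\text{and}\qquad \varphi^{-1}\rho(\tau)\varphi = \rho\bigl(\varphi^{-1}(\tau)\bigr) \]
for all $\tau\in G$. Both are immediate one-line checks obtained by evaluating at an arbitrary $g\in G$ and using that $\varphi$ is a homomorphism; alternatively they express the familiar fact that $\Aut(G)$ normalizes both $\lambda(G)$ and $\rho(G)$ inside $\Perm(G)$, which is built into the structure of $\Hol(G)$.

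Next, using that conjugation by $\varphi^{-1}$ is a group automorphism of $\Perm(G)$ together with the two identities above, I would compute, for $\tau\in G$,
\[ \varphi^{-1}\lambda(\tau)\rho(\psi(\tau))\varphi = \bigl(\varphi^{-1}\lambda(\tau)\varphi\bigr)\bigl(\varphi^{-1}\rho(\psi(\tau))\varphi\bigr) = \lambda\bigl(\varphi^{-1}(\tau)\bigr)\rho\bigl(\varphi^{-1}(\psi(\tau))\bigr). \]
Substituting $\sigma=\varphi^{-1}(\tau)$, i.e.\ $\tau=\varphi(\sigma)$, the right-hand side becomes $\lambda(\sigma)\rho\bigl((\varphi^{-1}\psi\varphi)(\sigma)\bigr)=\alpha_{\varphi^{-1}\psi\varphi}(\sigma)$. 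Since $\tau\mapsto\varphi^{-1}(\tau)$ is a bijection of $G$, letting $\tau$ range over $G$ shows that $\varphi^{-1}N_{\psi}\varphi$ consists of exactly the elements $\alpha_{\varphi^{-1}\psi\varphi}(\sigma)$, $\sigma\in G$; that is, $\varphi^{-1}N_{\psi}\varphi=N_{\varphi^{-1}\psi\varphi}$.

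I do not anticipate a genuine obstacle here: the only points requiring care are bookkeeping ones --- getting the direction of the conjugation formulas right, and handling consistently whatever sign convention is in force for $\rho$ (e.g.\ $\rho(\sigma)[x]=x\sigma^{-1}$), neither of which affects the identity $\varphi^{-1}\rho(\tau)\varphi=\rho(\varphi^{-1}(\tau))$ or the conclusion.
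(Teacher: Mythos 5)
Your proof is correct and follows essentially the same route as the paper: the paper verifies the single identity $\varphi^{-1}\alpha_{\psi}(\sigma)\varphi=\alpha_{\varphi^{-1}\psi\varphi}(\varphi^{-1}(\sigma))$ by one direct pointwise computation, which is exactly what your two conjugation identities plus the substitution $\sigma=\varphi^{-1}(\tau)$ amount to. Your remark that the convention $\rho(\sigma)[x]=x\sigma^{-1}$ does not disturb $\varphi^{-1}\rho(\tau)\varphi=\rho(\varphi^{-1}(\tau))$ is also accurate.
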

\begin{proof}
Let $ \sigma \in G $. Then for all $ \tau\in G $ we have
\begin{eqnarray*}
\varphi^{-1}\alpha_{\psi}(\sigma)\varphi [\tau] & = & \varphi^{-1} \lambda(\sigma) C(\psi(\sigma^{-1})) \varphi [\tau] \\
& = & \varphi^{-1} ( \sigma \psi(\sigma^{-1}) \varphi(\tau) \psi(\sigma) ) \\
& = & \varphi^{-1}(\sigma) \varphi^{-1}(\psi(\sigma^{-1})) \tau \varphi^{-1}(\psi(\sigma) ) \\
& = & \lambda(\varphi^{-1}(\sigma)) C(\varphi^{-1}(\psi(\sigma^{-1}))) [\tau] \\
& = & \lambda(\varphi^{-1}(\sigma)) C(\varphi^{-1}\psi\varphi(\varphi^{-1}(\sigma^{-1}))) [\tau] \\
& = & \alpha_{\varphi^{-1}\psi\varphi}(\varphi^{-1}(\sigma)) [\tau]. 
\end{eqnarray*}
Therefore $ \varphi^{-1}\alpha_{\psi}(\sigma)\varphi  = \alpha_{\varphi^{-1}\psi\varphi}(\varphi^{-1}(\sigma)) $ for all $ \sigma \in G $, and so $ \varphi^{-1}N_{\psi}\varphi = N_{\varphi^{-1}\psi\varphi} $. 
\end{proof}

To ease notation, if $ \psi \in \mathrm{Ab}(G) $ then we write $ \B_{\psi} $ rather than $ \B_{N_{\psi}} $ for the brace corresponding to $ N_{\psi} $, however, we caution the reader that two different elements of $ \mathrm{Ab}(G) $ can yield the same subgroup, so $ \B_{\psi} =  \B_{\psi'} $ does not imply that $ \psi = \psi' $. We also note that this construction may yield a different brace from the construction in \cite{KST}.

\begin{proposition}
If $ \psi \in \mathrm{Ab}(G) $ and $ \varphi \in \Aut(G) $ then $ \B_{\psi} \cong \B_{\varphi^{-1}\psi\varphi} $. Furthermore, if $ \B_{\psi} \cong\B_{\psi^{\prime}} $ for 
some $ \psi^{\prime} \in \mathrm{Ab}(G) $ then there exists $ \varphi \in \Aut(G) $ such that $ \psi^{\prime} = \varphi^{-1}\psi\varphi $. 
\end{proposition}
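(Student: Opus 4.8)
The plan is to derive both halves of the statement from the two structural facts already in hand: Proposition~\ref{prop_fpf_subgroups}, which identifies the subgroup attached to a conjugated endomorphism, and Proposition~\ref{prop_parameterization}(1), which characterizes brace equivalence of regular $G$-stable subgroups as conjugacy by an element of $\Aut(G)$.

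For the first assertion: by the preceding proposition $\varphi^{-1}\psi\varphi \in \mathcal{F}(G)$, so $\B_{\varphi^{-1}\psi\varphi}$ is defined, and by Proposition~\ref{prop_fpf_subgroups} it is the brace attached to $N_{\varphi^{-1}\psi\varphi} = \varphi^{-1}N_\psi\varphi$. Since $\varphi \in \Aut(G)$, Proposition~\ref{prop_parameterization}(1) (applied with $\B = \B_\psi$, whose corresponding subgroup is $N_\psi$) tells us that $\varphi^{-1}N_\psi\varphi$ yields a brace isomorphic to $\B_\psi$. Hence $\B_{\varphi^{-1}\psi\varphi} \cong \B_\psi$, as required.

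For the converse, suppose $\B_\psi \cong \B_{\psi'}$. Writing $\B_\psi = \B_{N_\psi}$ and $\B_{\psi'} = \B_{N_{\psi'}}$ and applying Proposition~\ref{prop_parameterization}(1) in the other direction, there is $\varphi \in \Aut(G)$ with $N_{\psi'} = \varphi^{-1}N_\psi\varphi$; Proposition~\ref{prop_fpf_subgroups} rewrites this as $N_{\psi'} = N_{\psi''}$, where $\psi'' := \varphi^{-1}\psi\varphi \in \mathcal{F}(G)$. Since $\psi''$ is already $\Aut(G)$-conjugate to $\psi$, it suffices to show that $N_{\psi'} = N_{\psi''}$ forces $\psi'$ and $\psi''$ to be $\Aut(G)$-conjugate.

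This last step is the crux, and it is precisely the subtlety flagged in the remark preceding the proposition: the assignment $\psi \mapsto N_\psi$ need not be injective, so $N_{\psi'} = N_{\psi''}$ does not by itself give $\psi' = \psi''$. To analyze the fibres I would compose the regularity bijections to produce an automorphism $\chi := \alpha_{\psi''}^{-1}\circ\alpha_{\psi'}$ of $G$ satisfying $\alpha_{\psi'} = \alpha_{\psi''}\circ\chi$, then expand this identity inside $\Perm(G)$ using $\alpha_\psi(\sigma) = \lambda(\sigma)\rho(\psi(\sigma))$ together with the commuting of $\lambda(G)$ and $\rho(G)$. Comparing the $\lambda$-components shows that $\sigma \mapsto \chi(\sigma)\sigma^{-1}$ is a homomorphism $G \to Z(G)$, and comparing the $\rho$-components then expresses $\psi'$ in terms of $\psi''$ and that homomorphism. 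The main obstacle is to promote this relation to a genuine conjugacy: when $Z(G)$ is trivial it degenerates to $\chi = \mathrm{id}$ and hence $\psi' = \psi''$ outright, so the converse is immediate; in general one must either exhibit a compatible automorphism directly from the relation, or absorb the discrepancy into the remaining freedom in the choice of $\varphi$. I expect this fibre analysis, rather than the two invocations of Propositions~\ref{prop_parameterization} and~\ref{prop_fpf_subgroups}, to be where the difficulty concentrates.
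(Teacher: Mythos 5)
Your argument follows the paper's proof essentially verbatim: the first claim is exactly the combination of Proposition \ref{prop_fpf_subgroups} with Proposition \ref{prop_parameterization}(1), and for the converse the paper likewise applies Proposition \ref{prop_parameterization}(1) to get $N_{\psi'} = \varphi^{-1}N_\psi\varphi$ and then Proposition \ref{prop_fpf_subgroups} to rewrite this as $N_{\psi'} = N_{\varphi^{-1}\psi\varphi}$. The point where you diverge is instructive: the paper's proof simply stops at the subgroup equality $N_{\psi'} = N_{\varphi^{-1}\psi\varphi}$ and does not derive the stated conclusion $\psi' = \varphi^{-1}\psi\varphi$, even though the remark immediately preceding the proposition explicitly warns that $\psi \mapsto N_\psi$ need not be injective. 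So the ``crux'' you isolate is a genuine gap in the literal statement, and it is present in the paper's own proof as well, not only in your attempt.

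Your fibre analysis is the right way to close it where it matters. Writing $\chi = \alpha_{\psi''}^{-1}\circ\alpha_{\psi'}$ and comparing components (using that $\lambda(a)=\rho(b)$ forces $a\in Z(G)$ and $b=a^{-1}$) gives $\chi(\sigma)=\sigma z(\sigma)^{-1}$ and $\psi'(\sigma)=z(\sigma)\,\psi''(\sigma)\,\psi''(z(\sigma))^{-1}$ for a homomorphism $z:G\to Z(G)$; when $Z(G)$ is trivial this yields $\psi'=\psi''$ outright, which covers the only place the proposition is invoked later (the case $G\cong M_{pq}$, which has trivial centre). You are right that for nontrivial centre the relation does not obviously promote to conjugacy, and neither you nor the paper resolves that; if you want a clean statement valid in general, the honest fix is to weaken the conclusion to $N_{\psi'}=N_{\varphi^{-1}\psi\varphi}$ (equivalently, $\psi'$ and $\varphi^{-1}\psi\varphi$ define the same subgroup), which is all the subsequent proposition about brace classes actually needs.
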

\begin{proof}
By Proposition \ref{prop_fpf_subgroups} we have $ N_{\varphi^{-1}\psi\varphi} = \varphi^{-1}N_{\psi}\varphi $; hence $ \B_{\psi} \cong \B_{\varphi^{-1}\psi\varphi} $ by Proposition \ref{prop_parameterization}.  If $ \B_{\psi} \cong\B_{\psi^{\prime}} $ for some $ \psi^{\prime} \in \mathrm{Ab}(G) $ then by Proposition \ref{prop_parameterization} $ N_{\psi^{\prime}} = \varphi^{-1} N_{\psi} \varphi $ for some $ \varphi \in \Aut(G) $; by Proposition \ref{prop_fpf_subgroups} we have $ \varphi^{-1} N_{\psi} \varphi = N_{\varphi^{-1}\psi\varphi} $, and so $ N_{\psi^{\prime}}  = N_{\varphi^{-1}\psi\varphi}  $.
\end{proof}

\begin{proposition} 
If $ \psi \in \mathrm{Ab}(G) $ and $ N $ is a regular $ G $-stable subgroup of $ \Perm(G) $ that is brace equivalent to $ N_{\psi} $ then $ N = \alpha_{\psi^{\prime}}(G) $ for some $ \psi^{\prime} \in \mathrm{Ab}(G) $. 
\end{proposition}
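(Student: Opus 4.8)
The plan is to reduce the statement to Proposition~\ref{prop_fpf_subgroups} by way of the description of brace classes established earlier in the paper. First I would use the fact, recorded after the definition of brace equivalence, that the brace class of $N_{\psi}$ is exactly $\{\varphi^{-1}N_{\psi}\varphi \mid \varphi \in \Aut(G)\}$. Since $N$ is by hypothesis brace equivalent to $N_{\psi}$, this produces some $\varphi \in \Aut(G)$ with $N = \varphi^{-1}N_{\psi}\varphi$.

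Next I would invoke Proposition~\ref{prop_fpf_subgroups} to rewrite $\varphi^{-1}N_{\psi}\varphi = N_{\varphi^{-1}\psi\varphi}$, and then set $\psi' = \varphi^{-1}\psi\varphi$. By the first proposition of this section (stability of $\mathcal{F}(G)$ under conjugation by automorphisms) we have $\psi' \in \mathcal{F}(G)$, whence $N = N_{\psi'} = \alpha_{\psi'}(G)$; this is the first assertion. For the final clause, I would simply recall from the summary of~\cite{Ch13} at the start of this section that $N_{\psi'}$ is $G$-isomorphic to $\lambda(G)$ for \emph{every} $\psi' \in \mathcal{F}(G)$, via the $G$-isomorphism $\theta(\lambda(\sigma)) = \lambda(\sigma)\rho(\psi'(\sigma))$; applying this to our particular $\psi'$ shows that $N$ is $G$-isomorphic to $\lambda(G)$.

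I do not anticipate a genuine obstacle: the argument is a short chaining of results already proved. The only point needing a little care is the application of the brace-class description to $\B_{\psi} = \B_{N_{\psi}}$. Here one should check, or recall, that under the bijection $a \colon N_{\psi} \to G$ the subgroup $N_{\B_{\psi}} \leq \Perm(N_{\psi})$ is carried to $N_{\psi}$ itself while $\Aut(N_{\psi},\circ)$ is carried to $\Aut(G)$ acting by conjugation inside $\Perm(G)$, so that Proposition~\ref{prop_parameterization} does indeed yield the brace class of $N_{\psi}$ in the form $\{\varphi^{-1}N_{\psi}\varphi \mid \varphi \in \Aut(G)\}$ used in the first step. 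Once that identification is in place, the proof is immediate.
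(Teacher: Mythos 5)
Your argument is correct and is essentially identical to the paper's proof: both obtain $\varphi \in \Aut(G)$ with $N = \varphi^{-1}N_{\psi}\varphi$ from Proposition~\ref{prop_parameterization}, apply Proposition~\ref{prop_fpf_subgroups} to identify this with $N_{\varphi^{-1}\psi\varphi}$, and conclude via Childs's construction that $N$ is $G$-isomorphic to $\lambda(G)$. The extra care you take over the identification of the brace class is a reasonable (if implicit in the paper) point, but nothing further is needed.
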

\begin{proof}
Since $ N $ is brace equivalent to $ N_{\psi} $ there exists $ \varphi \in \Aut(G) $ such that $ N = \varphi^{-1}N_{\psi}\varphi $. Applying Proposition \ref{prop_fpf_subgroups} we have $ N = N_{\varphi^{-1}\psi\varphi} $, so $ N = \alpha_{\psi^{\prime}}(G) $ with $ \psi^{\prime} = \varphi^{-1}\psi\varphi $. 
\end{proof}

If we impose the additional assumption that $ \psi $ is fixed-point-free (as in \cite{Ch13}) then \cite[Section 4]{Koc21} shows that $ N_{\psi} \cong G $ and that there exists another fixed-point-free abelian endomorphism $ \Psi $ such that $ N_{\psi} = \{ \lambda(\sigma) \rho(\Psi(\sigma)) \mid \sigma \in G \} $; following \cite{Ch13} we then see that the isomorphism $ \theta: \lambda(G) \rightarrow N_{\psi} $ defined by $ \theta(\lambda(\sigma)) = \lambda(\sigma) \rho(\Psi(\sigma)) $ is a $ G $-isomorphism. Thus we have:

\begin{corollary} \label{cor_fpf_G_iso}
If $ \psi \in \mathrm{Ab}(G) $ is fixed-point free and $ N $ is a regular $ G $-stable subgroup of $ \Perm(G) $ that is brace equivalent to $ N_{\psi} $ then $ N $ is $ G $-isomorphic to $ \lambda(G) $. 
\end{corollary}

\section{$ \lambda $-points and $ \rho $-points}

The prototypical examples of regular $ G $-stable subgroups of $ \Perm(G) $ are the subgroups $ \lambda(G) $ and $ \rho(G) $. A general regular $ G $-stable subgroup $ N $ may intersect nontrivially with one or both of these; in this section we show that studying these intersections can yield useful information about the brace and Hopf-Galois structure that correspond to $ N $. 

\begin{definition}
Let $ N $ a regular $ G $-stable subgroup of $ \Perm(G) $. The {\em $ \lambda $-points} of $ N $ are the elements of the set $ \Lambda_{N} = N \cap \lambda(G) $. The {\em $ \rho $-points} of $ N $ are the elements of the set $ P_{N} = N \cap \rho(G) $. 
\end{definition}

It is clear that $ \Lambda_{N} $ and $ P_{N} $ are both subgroups of $ N $.

\begin{example}
Let $ L/K $ be a Galois extension with Galois group $ G \cong Q_{8} $, as in Example \ref{example_brace_eq_vs_G_iso}, and consider the regular $ G $-stable subgroups $ D_{s,\lambda} $ and $ D_{s,\rho} $ constructed there. Then the $ \lambda $-points of $ D_{s,\lambda} $ are $ \lambda(1), \lambda(s), \lambda(s^{2}), \lambda(s^{3}) $, and the $ \rho $-points of $ D_{s,\lambda} $ are $ \rho(1) $ and $ \rho(s^{2}) $, since $ s^{2} \in Z(G) $. The results for $ D_{s,\rho} $ are analogous. 
\end{example}

First we study the behaviour of $ \lambda $-points and $ \rho $-points with respect to brace equivalence:

\begin{proposition} \label{prop_lambda_points_rho_points}
Let $ N,M $ be regular $ G $-stable subgroups of $ \Perm(G) $ and suppose that $ N,M $ are brace equivalent. Then:
\begin{enumerate}
\item\label{item_lambda_points_iso} $ \Lambda_{N} \cong \Lambda_{M} $;
\item\label{item_rho_points_iso}  $ P_{N} \cong P_{M} $. 
\end{enumerate}
\end{proposition}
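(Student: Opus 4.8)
The plan is to use Proposition~\ref{prop_parameterization} to turn the hypothesis of brace equivalence into a concrete relationship between $N_1$ and $N_2$, and then to exploit the fact that automorphisms of $G$, regarded as elements of $\Perm(G)$, normalize both $\lambda(G)$ and $\rho(G)$.

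First I would apply Proposition~\ref{prop_parameterization}(1): since $N_1$ and $N_2$ are brace equivalent, there is $\varphi \in \Aut(G)$ with $N_2 = \varphi^{-1} N_1 \varphi$. Next I would record the elementary identities
\[
\varphi\,\lambda(g)\,\varphi^{-1} = \lambda(\varphi(g)), \qquad \varphi\,\rho(g)\,\varphi^{-1} = \rho(\varphi(g)) \quad \text{for all } g \in G,
\]
each of which follows by evaluating both sides at an arbitrary element of $G$. Consequently conjugation by $\varphi$ (equivalently by $\varphi^{-1}$) restricts to an automorphism of the subgroup $\lambda(G)$ of $\Perm(G)$, and to an automorphism of the subgroup $\rho(G)$; in particular $\varphi^{-1}\lambda(G)\varphi = \lambda(G)$ and $\varphi^{-1}\rho(G)\varphi = \rho(G)$.

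Since conjugation by $\varphi^{-1}$ is a bijection of $\Perm(G)$, it commutes with the formation of intersections, so
\[
\Lambda_{N_2} = N_2 \cap \lambda(G) = \bigl(\varphi^{-1} N_1 \varphi\bigr) \cap \bigl(\varphi^{-1}\lambda(G)\varphi\bigr) = \varphi^{-1}\bigl(N_1 \cap \lambda(G)\bigr)\varphi = \varphi^{-1}\Lambda_{N_1}\varphi .
\]
Hence the restriction of conjugation by $\varphi$ is a group isomorphism from $\Lambda_{N_1}$ onto $\Lambda_{N_2}$, which gives (1). Replacing $\lambda(G)$ by $\rho(G)$ throughout yields $P_{N_2} = \varphi^{-1} P_{N_1}\varphi$, which gives (2).

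I do not anticipate a genuine obstacle here: the entire content of the proposition is that brace equivalence is witnessed by a single automorphism of $G$, and such an automorphism automatically carries the $\lambda$-points and $\rho$-points of one subgroup isomorphically onto those of the other. The only point deserving a moment's care is the verification of the conjugation identities displayed above, which depends on the paper's convention for $\rho$ (here $\rho(g)[x] = xg^{-1}$); but whatever convention is adopted, the conclusion that $\varphi$ normalizes both $\lambda(G)$ and $\rho(G)$ remains valid, so the argument goes through unchanged.
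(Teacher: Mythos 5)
Your proposal is correct and follows essentially the same route as the paper: both reduce brace equivalence to $N_2 = \varphi^{-1}N_1\varphi$ via Proposition~\ref{prop_parameterization} and then use the identity $\varphi^{-1}\lambda(\sigma)\varphi = \lambda(\varphi^{-1}(\sigma))$ (and its $\rho$ analogue) to see that conjugation by $\varphi$ restricts to an isomorphism $\Lambda_{N_1} \to \Lambda_{N_2}$ and $P_{N_1} \to P_{N_2}$. Your explicit remark that conjugation commutes with intersections just makes surjectivity onto $\Lambda_{N_2}$ slightly more transparent than the paper's phrasing.
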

\begin{proof}
Since $ N, M $ are brace equivalent, there exists $ \varphi \in \Aut(G) $ such that $ M = \varphi^{-1}N\varphi $. To prove \eqref{item_lambda_points_iso}, define $ \theta : \Lambda_{N} \rightarrow M $ by $ \theta(\lambda(\sigma)) = \varphi^{-1}(\lambda(\sigma)) \varphi $. Then for all $ \tau \in G $ we have
\[ \theta(\lambda(\sigma))[\tau] = \varphi^{-1}(\lambda(\sigma)) \varphi [\tau] = \varphi^{-1}( \sigma \varphi(\tau) ) = \varphi^{-1}( \sigma ) \tau = \lambda( \varphi^{-1}( \sigma ) ) [\tau]. \]
Hence $ \theta $ is actually a map from $ \Lambda_{N} $ to $ \Lambda_{M} $, which is clearly an isomorphism. The proof of \eqref{item_rho_points_iso} is similar. 
\end{proof}

Proposition \ref{prop_lambda_points_rho_points} provides a useful necessary condition for two regular $ G $-stable subgroups to be brace equivalent, which we shall apply in Section \ref{sec_brace_classifications}.
\\ \\
In fact, the isomorphisms established in Proposition \ref{prop_lambda_points_rho_points} are $ G $-isomorphisms. More generally, $ \rho $-points interact well with $ G $-isomorphism:

\begin{proposition} \label{prop_rho_points_isomorphism}
Let $ N, M $ be regular $ G $-stable subgroups of $ \Perm(G) $ and suppose that $ \theta : N \rightarrow M $ is a $ G $-isomorphism. Then $ P_{N} = \theta( P_{M} ) $. 
\end{proposition}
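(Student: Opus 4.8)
The plan is to observe that, once $\Perm(G)$ is equipped with the $G$-action $\,^{g}\pi = \lambda(g)\pi\lambda(g)^{-1}$, membership in $\rho(G)$ is detected purely by this action: a permutation $\pi \in \Perm(G)$ lies in $\rho(G)$ if and only if $\,^{g}\pi = \pi$ for all $g \in G$. Indeed, $\,^{g}\pi = \pi$ for all $g$ says exactly that $\pi$ commutes with every $\lambda(g)$, and it is standard that $C_{\Perm(G)}(\lambda(G)) = \rho(G)$: the inclusion $\rho(G) \subseteq C_{\Perm(G)}(\lambda(G))$ holds because left and right translations commute, and conversely if $\pi\lambda(g) = \lambda(g)\pi$ for all $g$ then evaluating at $1_{G}$ gives $\pi(g) = g\,\pi(1_{G})$, so $\pi = \rho(\pi(1_{G})^{-1}) \in \rho(G)$. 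Consequently, for any regular $G$-stable subgroup $N$ of $\Perm(G)$ we may describe its $\rho$-points internally as
\[ P_{N} = \{ \eta \in N \mid \,^{g}\eta = \eta \text{ for all } g \in G \}. \]

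Granting this description, the first step is immediate from the $G$-equivariance of $\theta$: if $\eta \in P_{N_{1}}$ then $\,^{g}\theta(\eta) = \theta(\,^{g}\eta) = \theta(\eta)$ for every $g \in G$, so $\theta(\eta)$ is $G$-fixed and hence lies in $P_{N_{2}}$; this gives $\theta(P_{N_{1}}) \subseteq P_{N_{2}}$. For the reverse inclusion I would note that $\theta^{-1} : N_{2} \to N_{1}$ is again a group isomorphism, and that applying $\theta^{-1}$ to the identity $\theta(\,^{g}\eta) = \,^{g}\theta(\eta)$ shows $\theta^{-1}$ is $G$-equivariant as well; running the argument of the previous sentence with $\theta^{-1}$ in place of $\theta$ then yields $\theta^{-1}(P_{N_{2}}) \subseteq P_{N_{1}}$, i.e.\ $P_{N_{2}} \subseteq \theta(P_{N_{1}})$. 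Combining the two inclusions gives $P_{N_{2}} = \theta(P_{N_{1}})$.

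There is no serious obstacle here: the only ingredient beyond formal manipulation is the classical identification of $\rho(G)$ with the $G$-fixed subset of $\Perm(G)$, after which everything follows from equivariance and a symmetry argument. It is worth remarking why the analogous statement for $\lambda$-points fails, and hence why Proposition~\ref{prop_rho_points_isomorphism} must be phrased for $\rho$-points: membership in $\lambda(G)$ is \emph{not} characterized by the $G$-action alone, so a $G$-isomorphism has no reason to carry $N_{1} \cap \lambda(G)$ onto $N_{2} \cap \lambda(G)$ — indeed the subgroup $N$ of Example~\ref{example_G_iso_vs_Brace_eq} is $G$-isomorphic to $\lambda(G)$ while $N \cap \lambda(G)$ is strictly smaller than $\lambda(G)$.
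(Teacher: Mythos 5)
Your proof is correct and follows essentially the same route as the paper's: characterize $\rho(G)$ as the centralizer of $\lambda(G)$ in $\Perm(G)$, so that $P_{N}$ is the set of $G$-fixed elements of $N$, then use $G$-equivariance of $\theta$ for one inclusion and symmetry (via $\theta^{-1}$, i.e.\ reversing the roles of $N_{1}$ and $N_{2}$) for the other. The only difference is that you spell out the verification that $C_{\Perm(G)}(\lambda(G)) = \rho(G)$, which the paper takes as standard.
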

\begin{proof}
We may characterize $ \rho(G) $ as the centralizer of $ \lambda(G) $ in $ \Perm(G) $: thus a permutation $ \eta \in \Perm(G) $ lies in $ \rho(G) $ if and only if $ \,^{\sigma}\eta = \eta $ for all $ \sigma \in G $. Now let $ \eta \in P_{N} $; then for all $ \sigma \in G $ we have
\begin{eqnarray*}
\,^{\sigma} \theta(\eta) & = & \theta(^{\sigma}\eta) \mbox{ since $ \theta $ is a $ G $-isomorphism } \\
& = & \theta(\eta) \mbox{ since $ \eta \in P_{N} $. }
\end{eqnarray*}
Hence $ \theta(\eta) \in P_{M} $. Reversing the roles of $ N, M $ yields the result. 
\end{proof}

Proposition \ref{prop_lambda_points_rho_points} provides a useful necessary condition for two regular $ G $-stable subgroups to be  $ G $-isomorphic. However, the analogous result for $ \lambda $-points is not true:

\begin{example}
Let $ L/K $ be a Galois extension with Galois group $ G \cong D_{4} $, as in Example \ref{example_G_iso_vs_Brace_eq}, and recall from that example that $ N = \langle \eta,\pi \rangle $, with $ \eta = \lambda(\sigma)\rho(\tau) $ and $ \pi = \lambda(\tau) $, is a $ G $-stable regular subgroup of $ \Perm(G) $ that is $ G $-isomorphic to $ \lambda(G) $. Obviously every element of $ \lambda(G) $ is a $ \lambda $-point, but the $ \lambda $-points of $ N $ are $ \lambda(1), \lambda(\tau), \lambda(\sigma^{2}), \lambda(\sigma^{2}\tau) $. 
\end{example}

\section{Opposite braces} \label{sec_opposites} 

In Greither and Paregis's original paper characterizing Hopf-Galois structures on separable field extensions \cite{GP87} they observe that if $ N $ is a regular $ G $-stable subgroup of $ \Perm(G) $, then so too is $ N^{opp} = \mathrm{Cent}_{\Perm(G)}(N) $; this construction, and the corresponding Hopf-Galois structures on a Galois extension with Galois group $ G $, have subsequently been studied in, for example, \cite{Ko07}, \cite{Tr18}, and \cite{KT20}. The notation $ N^{opp} $ reflects the fact that this subgroups can be naturally identified with the opposite group of $ N $; in \cite{KT20} the authors referred to the Hopf-Galois structure corresponding to $ N^{opp} $ as the {\em opposite} of the one corresponding to $ N $. We showed that this construction leads naturally to the notion of the {\em opposite} of a brace, as follows: given a brace $ \B = (B,\cdot,\circ) $, we define a new binary operation on $ B $ by $ x \cdot^{\prime} y = y \cdot x $ for all $ x,y \in B $. Then $ \B^{opp} := (B,\cdot^{\prime},\circ) $ is a brace, called the {\em opposite} of the brace $ \B $. This concept is also developed independently by Rump \cite{Ru19}. If $ N $ is a regular $ G $-stable subgroup of $ \Perm(G) $, with corresponding brace $ \B_{N} $, then the brace corresponding to the opposite subgroup $ N^{opp} $ is then $ \B_{N^{opp}} = \left(\B_{N}\right)^{opp} $. If $ \B = (B,\cdot,\circ) $ is a brace and $ N $ is the regular $ (B,\circ) $-stable subgroup of $ \Perm(B) $ arising from the dot operation in $ B $ then the subgroup arising from the dot operation in $ \B^{opp} $ is $ N^{opp} $. 


The notion of opposite extends to brace classes:

\begin{proposition}
Let $ N $ be a regular $ G $-stable subgroup of $ \Perm(G) $, and let $ N^{opp} $ be the opposite subgroup to $ N $. Then for each $ \varphi \in \Aut(G) $ we have $ \left( \varphi^{-1} N \varphi \right)^{opp} = \varphi^{-1} N^{opp} \varphi $. 
\end{proposition}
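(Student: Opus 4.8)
The plan is to exploit the description of the opposite subgroup as a centralizer. Recall from the discussion preceding the proposition that, for a regular $G$-stable subgroup $M$ of $\Perm(G)$, the opposite subgroup is $M^{\prime} = \mathrm{Cent}_{\Perm(G)}(M)$. Since $\varphi$ is an automorphism of $G$, it is in particular a permutation of the underlying set of $G$, so $\varphi \in \Perm(G)$ and conjugation by $\varphi$ is an inner automorphism of $\Perm(G)$. By Proposition \ref{prop_parameterization}, $\varphi^{-1} N \varphi$ is again a regular $G$-stable subgroup of $\Perm(G)$, so its opposite is defined and equals $\mathrm{Cent}_{\Perm(G)}(\varphi^{-1} N \varphi)$.

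The key step is the elementary observation that any automorphism $\alpha$ of a group $\Gamma$ satisfies $\mathrm{Cent}_{\Gamma}(\alpha(H)) = \alpha(\mathrm{Cent}_{\Gamma}(H))$ for every subgroup $H \leq \Gamma$: indeed $x$ centralizes $\alpha(H)$ if and only if $\alpha^{-1}(x)$ centralizes $H$. Taking $\Gamma = \Perm(G)$, $H = N$, and $\alpha$ to be conjugation by $\varphi^{-1}$ gives
\[ \left( \varphi^{-1} N \varphi \right)^{\prime} = \mathrm{Cent}_{\Perm(G)}(\varphi^{-1} N \varphi) = \varphi^{-1}\, \mathrm{Cent}_{\Perm(G)}(N)\, \varphi = \varphi^{-1} N^{\prime} \varphi, \]
which is the claim.

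Alternatively, one can argue entirely on the level of brace operations. Writing $\B = \B_{N} = (B,\cdot,\circ)$, the subgroup $\varphi^{-1} N \varphi$ corresponds to the brace $(B,\cdot_{\varphi},\circ)$ with $x \cdot_{\varphi} y = \varphi^{-1}(\varphi(x) \cdot \varphi(y))$, and its opposite corresponds to $(B,(\cdot_{\varphi})^{\prime},\circ)$ with $x (\cdot_{\varphi})^{\prime} y = y \cdot_{\varphi} x = \varphi^{-1}(\varphi(y) \cdot \varphi(x))$. On the other hand $N^{\prime}$ corresponds to $(B,\cdot^{\prime},\circ)$ with $x \cdot^{\prime} y = y \cdot x$, and $\varphi^{-1} N^{\prime} \varphi$ to $(B,(\cdot^{\prime})_{\varphi},\circ)$ with $x (\cdot^{\prime})_{\varphi} y = \varphi^{-1}(\varphi(x) \cdot^{\prime} \varphi(y)) = \varphi^{-1}(\varphi(y) \cdot \varphi(x))$. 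These two binary operations coincide, so the two subgroups are equal. Either way there is no genuine obstacle here; the only point requiring care is to invoke Proposition \ref{prop_parameterization} to know that $\varphi^{-1} N \varphi$ is of the form to which the opposite construction applies, and (in the first argument) to recall that the opposite subgroup is literally the centralizer so that the generic centralizer identity can be applied verbatim.
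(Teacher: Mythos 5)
Your first argument is correct and is essentially the paper's proof: both rest on the identification $N^{\prime} = \mathrm{Cent}_{\Perm(G)}(N)$ and the fact that conjugation by $\varphi$ is an automorphism of $\Perm(G)$. The only difference is cosmetic — the paper verifies directly that each $\varphi^{-1}\eta^{\prime}\varphi$ commutes with each $\varphi^{-1}\eta\varphi$, obtaining the inclusion $\varphi^{-1}N^{\prime}\varphi \subseteq (\varphi^{-1}N\varphi)^{\prime}$, and then concludes equality from the fact that both groups have order $|G|$ (opposites of regular subgroups being regular), whereas your generic identity $\mathrm{Cent}_{\Gamma}(\alpha(H)) = \alpha(\mathrm{Cent}_{\Gamma}(H))$ delivers both inclusions at once and avoids the counting step. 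Your alternative argument via brace operations is a genuinely different and also valid route: it uses the dictionary $N \leftrightarrow \B_{N}$ together with the stated compatibility $\B_{N^{\prime}} = (\B_{N})^{\prime}$, and reduces the claim to the observation that the operations $x\,(\cdot_{\varphi})^{\prime}\,y$ and $x\,(\cdot^{\prime})_{\varphi}\,y$ both equal $\varphi^{-1}(\varphi(y)\cdot\varphi(x))$; this buys a proof that never touches permutations, at the cost of invoking the full correspondence machinery (including the fact that a brace structure on $(B,\circ)$ determines its subgroup uniquely) for what is at bottom an elementary centralizer computation.
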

\begin{proof}
Let $ \eta^{\prime} \in N^{opp} $, so that $ \varphi^{-1} \eta^{\prime} \varphi \in \varphi^{-1} N^{opp} \varphi $. Then for all $ \eta \in N $ we have 
\begin{eqnarray*}
\left( \varphi^{-1} \eta^{\prime}  \varphi \right) \left( \varphi^{-1} \eta \varphi \right) & = & \left( \varphi^{-1} \eta^{\prime} \eta \varphi \right) \\
& = & \left( \varphi^{-1} \eta \eta^{\prime} \varphi \right) \\
& = & \left( \varphi^{-1} \eta  \varphi \right) \left( \varphi^{-1} \eta^{\prime} \varphi \right).
\end{eqnarray*} 
Hence $ \varphi^{-1} N^{opp} \varphi \subseteq \left( \varphi^{-1} N \varphi \right)^{opp} $. But these groups have equal order, so in fact they are equal. 
\end{proof}

\begin{corollary}
The brace class of $ N^{opp} $ consists precisely of the opposites of the subgroups in the brace class of $ N $. In particular, these brace classes are of equal size. 
\end{corollary}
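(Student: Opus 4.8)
The plan is to deduce this corollary directly from the preceding proposition, which tells us that the operation $ N \mapsto N^{\prime} $ commutes with conjugation by automorphisms of $ G $. Recall from the discussion following Proposition~\ref{prop_parameterization} that the brace class of a regular $ G $-stable subgroup $ N $ is exactly the set $ \{ \varphi^{-1} N \varphi \mid \varphi \in \Aut(G) \} $. So I would argue as follows.

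First I would show that the brace class of $ N^{\prime} $ is contained in the set of opposites of the subgroups in the brace class of $ N $. Let $ M $ be any member of the brace class of $ N^{\prime} $; then $ M = \varphi^{-1} N^{\prime} \varphi $ for some $ \varphi \in \Aut(G) $, and by the preceding proposition $ \varphi^{-1} N^{\prime} \varphi = \left( \varphi^{-1} N \varphi \right)^{\prime} $. Since $ \varphi^{-1} N \varphi $ lies in the brace class of $ N $, this exhibits $ M $ as the opposite of a subgroup in the brace class of $ N $. For the reverse inclusion, take a subgroup in the brace class of $ N $, say $ \varphi^{-1} N \varphi $; its opposite is $ \left( \varphi^{-1} N \varphi \right)^{\prime} = \varphi^{-1} N^{\prime} \varphi $ by the same proposition, and this lies in the brace class of $ N^{\prime} $. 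This establishes the first sentence of the corollary.

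For the statement about sizes, I would observe that the map $ M \mapsto M^{\prime} $ is an involution on the set of all regular $ G $-stable subgroups of $ \Perm(G) $ — this follows from the definition of the opposite, since taking the centralizer twice (equivalently, reversing a group operation twice) returns the original subgroup — hence it is in particular a bijection. By the first part of the corollary it restricts to a bijection from the brace class of $ N $ onto the brace class of $ N^{\prime} $, so the two classes have the same cardinality.

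I do not anticipate any serious obstacle here: the corollary is essentially a bookkeeping consequence of the proposition together with the parametrization of brace classes from Proposition~\ref{prop_parameterization}. The only point requiring a moment's care is the observation that $ N \mapsto N^{\prime} $ is an involution (so that it is a genuine bijection rather than merely a well-defined map), but this is immediate from either description of the opposite.
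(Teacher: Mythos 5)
Your argument is correct and is exactly the intended deduction: the paper states this corollary without proof precisely because it follows immediately from the preceding proposition together with the description of the brace class of $N$ as $\{\varphi^{-1}N\varphi \mid \varphi \in \Aut(G)\}$. Your extra care in noting that $N \mapsto N^{\prime}$ is an involution (hence a bijection, giving the equality of sizes) is a valid and complete way to finish.
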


\begin{example}
Let $ L/K $ be a Galois extension with Galois group $ G = \langle \sigma, \tau \rangle \cong Q_{8} $, as in Example \ref{example_brace_eq_vs_G_iso}, and consider the regular $ G $-stable subgroups $ D_{s,\lambda} $ and $ D_{s,\rho} $ described in that example. We saw there that the subgroups $ D_{s,\rho} $ are brace equivalent and exhaust their brace class. It is routine to verify that $ D_{s,\rho}^{opp} = D_{s,\lambda} $ for each $ s $; thus the subgroups $ D_{s,\lambda} $ are brace equivalent and exhaust their brace class, as stated at the end of that example. 
\end{example}

\begin{corollary} \label{cor_opposite_rho_conjugate}
Let $ N, M $ be $ G $-stable regular subgroups of $ \Perm(G) $, and suppose that $ N,M $ are $ \rho $-conjugate. Then $ N^{opp}, M^{opp} $ are $ \rho $-conjugate.
\end{corollary}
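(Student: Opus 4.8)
The plan is to exploit the description of the opposite subgroup as a centralizer, $ N' = \mathrm{Cent}_{\Perm(G)}(N) $, together with the elementary fact that any automorphism of a group carries the centralizer of a subgroup onto the centralizer of its image.

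Concretely, suppose $ N_{2} = \rho(\sigma) N_{1} \rho(\sigma)^{-1} $ for some $ \sigma \in G $. Conjugation by $ \rho(\sigma) $ is an automorphism $ c_{\sigma} $ of $ \Perm(G) $, so for any subgroup $ S \leq \Perm(G) $ we have $ \mathrm{Cent}_{\Perm(G)}(c_{\sigma}(S)) = c_{\sigma}\left( \mathrm{Cent}_{\Perm(G)}(S) \right) $. Taking $ S = N_{1} $ gives
\[ N_{2}' = \mathrm{Cent}_{\Perm(G)}\left(\rho(\sigma) N_{1} \rho(\sigma)^{-1}\right) = \rho(\sigma)\, \mathrm{Cent}_{\Perm(G)}(N_{1})\, \rho(\sigma)^{-1} = \rho(\sigma)\, N_{1}'\, \rho(\sigma)^{-1}. \]
Hence $ N_{1}' $ and $ N_{2}' $ are $ \rho $-conjugate, in fact via the very same element $ \sigma $.

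The only point requiring care is to check that the statement even makes sense, i.e.\ that $ N_{1}' $ and $ N_{2}' $ are themselves regular $ G $-stable subgroups of $ \Perm(G) $; but this is precisely the Greither--Pareigis observation recalled at the start of this section. So I do not anticipate a genuine obstacle. If one preferred to avoid invoking the automorphism--centralizer fact, one could instead repeat verbatim the computation in the proof of the preceding proposition, with $ \varphi $ replaced by $ \rho(\sigma) $: that argument uses only that conjugation by the relevant element is a bijection of $ \Perm(G) $, together with the equality of orders $ |N_{i}'| = |N_{i}| $ for a regular subgroup $ N_{i} $. The centralizer viewpoint is, however, the cleaner route.
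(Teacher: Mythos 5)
Your argument is correct. The paper itself gives no written proof of this corollary: it is meant to follow by combining the preceding proposition, $\left( \varphi^{-1} N \varphi \right)^{\prime} = \varphi^{-1} N^{\prime} \varphi$ for $\varphi \in \Aut(G)$ (proved there by a commuting computation plus an order count), with the earlier identity $\rho(\sigma) N \rho(\sigma)^{-1} = \gamma_{\sigma} N \gamma_{\sigma}^{-1}$, which converts $\rho$-conjugation into conjugation by an inner automorphism so that the proposition applies. You instead go straight at the centralizer description $N^{\prime} = \mathrm{Cent}_{\Perm(G)}(N)$ and use the elementary fact that an inner automorphism of the ambient group $\Perm(G)$ carries the centralizer of a subgroup to the centralizer of its image; this yields $N_{2}^{\prime} = \rho(\sigma) N_{1}^{\prime} \rho(\sigma)^{-1}$ in one line, with the same conjugating element. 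Your route is more direct and slightly more general, since it works for conjugation by an arbitrary element of $\Perm(G)$ rather than only by elements realizing automorphisms of $G$; what the paper's route buys is reuse of a proposition it needs anyway for the statement about brace classes of opposites. Your side remark that the regularity and $G$-stability of $N_{i}^{\prime}$ is the Greither--Pareigis observation is also accurate, so there is no gap.
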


As pointed out in \cite[Section 6]{KT20}, it is possible for $\B_N$ and $\B_{N^{opp}}$ to be isomorphic; when this occurs, the brace classes of $ N $ and $ N^{opp} $ coincide.  

On the other hand, if $ N,M $ are regular $ G $-stable subgroups of $ \Perm(G) $ that are $ G $-isomorphic, it does not necessarily follow that $ N^{opp} $ and $ M^{opp} $ are $ G $-isomorphic:

\begin{example}
Let $ L/K $ be a Galois extension with Galois group $ G \cong D_{4} $, as in Example \ref{example_G_iso_vs_Brace_eq}, and recall from that example that $ N = \langle \eta,\pi \rangle $, with $ \eta = \lambda(\sigma)\rho(\tau) $ and $ \pi = \lambda(\tau) $, is a $ G $-stable regular subgroup of $ \Perm(G) $ that is $ G $-isomorphic to $ \lambda(G) $. However, we have $ \lambda(G)^{opp} = \rho(G) $, and no other regular $ G $-stable subgroup of $ \Perm(G) $ can be $ G $-isomorphic to $ \rho(G) $. Therefore $ N^{opp} $ is not $ G $-isomorphic to $ \lambda(G)^{opp} $. 	
\end{example}

\section{Braces of order $ pq $ and Hopf-Galois structures of degree $ pq $} \label{sec_brace_classifications}

Let $ p,q $ be prime numbers with $ p>q $. In \cite{By04} Byott classifies the Hopf-Galois structures admitted by Galois extensions of fields of degree $ pq $; building upon these results, the braces of order $ pq $ are classified in \cite{AB20} and \cite[Subsection 2.9]{CCDC20}. 

In this section we consider in turn each of the isomorphically distinct braces $ \B = (B,\cdot, \circ) $ of order $ pq $. Writing $ N = (B,\cdot) $ and $ G = (B,\circ) $, we recover all of the $ G $-stable regular subgroups of $ \Perm(G) $ that are isomorphic to $ N $, and hence the Hopf-Galois structures of type $ N $ on a Galois extension of fields with Galois group $ G $. We use the tools developed in the earlier sections to arrange these subgroups into $ G $-isomorphism classes, which is equivalent to determining the Hopf algebra isomorphism classes of the Hopf algebras giving the Hopf-Galois structures. The classifications of braces in \cite{AB20} and \cite[Subsection 2.9]{CCDC20} are organized by fixing a presentation of the dot group and allowing the circle group to vary. Here it is more convenient to reverse this organization; where necessary, we state maps that reconcile our descriptions with those in loc. cit. 

If $ p \not \equiv 1 \pmod{q} $ then $ N $ and $ G $ must both be isomorphic to $ C $, the cyclic group of order $ pq $; if $ p \equiv 1 \pmod{q} $ then each of these groups is isomorphic to either $ C $ or to $ M $, the metacyclic group of order $ pq $. We describe both these groups via two generators $ \sigma, \tau $, using the underlying set
\[ B = \{ \sigma^{i}\tau^{j} \mid 0 \leq i \leq p-1, \; 0 \leq j \leq q-1 \}, \]
where $ \sigma $ has order $ p $ and $ \tau $ has order $ q $. To obtain $ C $ we impose the relation $ \tau\sigma\tau^{-1} =\sigma $; to obtain $ M $ we fix an integer $ g $ whose multiplicative order modulo $ p $ is $ q $ and impose the relation $ \tau\sigma\tau^{-1} =\sigma^{g} $. We also fix notation for the automorphisms of these groups: we have $ \Aut(C) = \{ \varphi_{t} \mid \gcd(t,pq)=1 \} $, where $ \varphi_{t}(\sigma)=\sigma^{t} $ and $ \varphi_{t}(\tau) = \tau^{t} $, and $ \Aut(M) = \langle \phi, \psi \rangle $, where $ \phi(\sigma\tau) = \sigma^{d}\tau $ with $ d $ a primitive root modulo $ p $ and $ \phi(\tau) = \tau $, and $ \psi(\sigma) = \sigma $ and $ \psi(\tau) = \sigma \tau$. 
\\ \\
\noindent \textbf{The case} $ \mathbf{G \cong N \cong C: }$
Up to isomorphism, there is a unique brace $ \B $ with $ N \cong G \cong C $, which is the trivial brace for $ C $ \cite[Proposition 3.1]{AB20}. The regular $ G $-stable subgroup of $ \Perm(G) $ obtained from the dot operation in $ \B $ is simply $ \rho(G) $. Moreover, every automorphism of $ G $ is automatically an automorphism of $ \B $; hence $ \rho(G) $ is the unique regular $G$-stable subgroup of $\Perm(G)$ that is isomorphic to $ N $ \cite[(4.1)]{By04}. There are no $ G $-isomorphism questions to consider in this case. 
\\ \\
As mentioned above, if $ p\not\equiv 1 \pmod{q}$ then every group of order $pq$ is cyclic, and so this case is the only one that can occur. For the remainder of this section we assume that $p\equiv 1 \pmod{q}$. 
\\ \\
\noindent  \textbf{The case} $ \mathbf{G \cong C, \; N \cong M: }$
There are two isomorphically distinct braces with these properties. The first is the brace $ A_{q} $ constructed in  \cite[part (iii) of the second bullet point of the main theorem]{AB20}. In our notation this takes the form $ \B = (B,\cdot,\circ) $, where 
\[ \begin{array}{lll}
\sigma^{i}\tau^{j} \cdot \sigma^{k}\tau^{\ell} & = & \sigma^{i+kg^{j}} \tau^{j+\ell} \\
\sigma^{i}\tau^{j} \circ \sigma^{k}\tau^{\ell} & = & \sigma^{i+k} \tau^{j+\ell}.
\end{array} \]
The second brace with these properties is constructed in \cite[part (ii) of the second bullet point of the main theorem]{AB20}. We obtain it by taking the opposite to the brace $ \B $ above: we have $ \B^{opp} = (B,\cdot',\circ) $, where
\[ \begin{array}{lll}
\sigma^{i}\tau^{j} \cdot' \sigma^{k}\tau^{\ell} & = & \sigma^{k+ig^{\ell}} \tau^{j+\ell} \\
\sigma^{i}\tau^{j} \circ \sigma^{k}\tau^{\ell} & = & \sigma^{i+k} \tau^{j+\ell}.
\end{array} \]
To reconcile this description with the one given in loc. cit. we apply the map $ \sigma^{i}\tau^{j} \mapsto \tau^{j}\sigma^{i} $.

The regular $ G $-stable subgroup  of $ \Perm(G) $ obtained from the dot operation in $ \B $ is $ N = \langle \eta, \pi \rangle $, where
\[ \begin{array}{l}
\eta(\sigma^{k}\tau^{\ell}) = \sigma \cdot \sigma^{k}\tau^{\ell} = \sigma^{k+1}\tau^{\ell} \\
\pi(\sigma^{k}\tau^{\ell}) = \tau \cdot \sigma^{k}\tau^{\ell} = \sigma^{kg}\tau^{\ell+1}.
\end{array} \]
We determine all of the regular $ G $-stable regular subgroups of $ \Perm(G) $ that are brace equivalent to $ N $. For each $ t $ coprime to $ pq $ we conjugate the generators of $ N $ by the automorphism $ \varphi_{t} $:
\[ \begin{array}{l}
\varphi_{t}^{-1}\eta\varphi_{t}(\sigma^{k}\tau^{\ell}) = \sigma^{k+t^{-1}}\tau^{\ell} = \eta^{t^{-1}}(\sigma^{k}\tau^{\ell}) \\
\varphi_{t}^{-1}\pi\varphi_{t}(\sigma^{k}\tau^{\ell}) = \sigma^{kg}\tau^{\ell+t^{-1}} = \pi_{t}(\sigma^{k}\tau^{\ell}),
\end{array} \]
say (with $ \pi = \pi_{1} $). The automorphism $ \varphi_{t} $ respects $ \cdot $ if and only if $ t \equiv 1 \pmod{q} $, and so by Proposition \ref{prop_parameterization} the subgroups $ N_{t} = \langle \eta, \pi_{t} \rangle $ are a family of $ q-1 $ regular $ G $-stable subgroups of $ \Perm(G) $. This is the family of subgroups described in \cite[Lemma 5.2, Equation (5.8)]{By04}.
\\ \\
The regular $ G $-stable subgroup  of $ \Perm(G) $ obtained from the dot operation in $ \B^{opp} $ is $ N^{opp} = \langle \eta', \pi' \rangle $, where
\[ \begin{array}{l}
\eta'(\sigma^{k}\tau^{\ell}) = \sigma \cdot^{\prime} \sigma^{k}\tau^{\ell} = \sigma^{k+g^{\ell}}\tau^{\ell} \\
\pi'(\sigma^{k}\tau^{\ell}) = \tau \cdot^{\prime} \sigma^{k}\tau^{\ell} = \sigma^{k}\tau^{\ell+1}.
\end{array} \]
We determine all of the regular $ G $-stable regular subgroups of $ \Perm(G) $ that are brace equivalent to $ N^{opp} $. Proceeding as above we have
\[ \begin{array}{l}
\varphi_{t}^{-1}\eta'\varphi_{t}(\sigma^{k}\tau^{\ell}) = \sigma^{k+t^{-1}g^{t\ell}}\tau^{\ell} = \eta'_{t}(\sigma^{k}\tau^{\ell}) \\
\varphi_{t}^{-1}\pi'\varphi_{t}(\sigma^{k}\tau^{\ell}) = \sigma^{k}\tau^{\ell+t^{-1}} = (\pi')^{t^{-1}}(\sigma^{k}\tau^{\ell}),
\end{array} \]
say (with $ \eta' = \eta'_{1} $). The automorphism $ \varphi_{t} $ respects $ \cdot' $ if and only if $ t \equiv 1 \pmod{q} $, and so the subgroups $ N^{opp}_{t} = \langle \eta', \pi'_{t} \rangle $ are a family of $ q-1 $ regular $ G $-stable subgroups of $ \Perm(G) $. This is the family of subgroups described in \cite[Lemma 5.1, Equation (5.3)]{By04}. The subgroups $ N_{t} $ and $ N'_{t} $ account for all of the $ G $-stable regular subgroups of $ \Perm(G) $ that are isomorphic to $ M $.

\begin{proposition}
The subgroups $ N_{t} $ are mutually $ G $-isomorphic. The subgroups $ N_{t}^{opp} $ are pairwise non $ G $-isomorphic, and none are $ G $-isomorphic to any of the subgroups $ N_{t} $. 
\end{proposition}
\begin{proof}
For $ 1 \leq t \leq q-1 $ the action of $ G $ on the subgroup $ N_{t} $ is given by 
\[ \begin{array}{ll}
\,^{\sigma}\eta = \eta, & \,^{\tau}\eta = \eta, \\
\,^{\sigma} \pi_{t} = \eta^{1-g} \pi_{t},  & \,^{\tau}\pi_{t} = \pi_{t}. 
\end{array} \]
Hence the natural isomorphism $ N_{1} \rightarrow N_{t} $ defined by $ \eta^{i}\pi_{1}^{j} \mapsto \eta_{i}\pi_{t}^{j} $ is a $ G $-isomorphism, and so the subgroups $ N_{t} $ are mutually $ G $-isomorphic. 

On the other hand for $ 1 \leq t \leq q-1 $ the action of $ G $ on the subgroup $ N_{t}^{opp} $ is given by
\[ \begin{array}{ll}
\,^{\sigma}\eta_{t}' = \eta_{t}', & \,^{\tau}\eta_{t}' = (\eta_{t}')^{g^{-t}}, \\
\,^{\sigma} \pi' = \pi',  & \,^{\tau}\pi' = \pi'. 
\end{array} \]
Now if $ \theta: N^{opp}_{t_{1}} \rightarrow N^{opp}_{t_{2}} $ is a $ G $-isomorphism then $ \theta(\eta_{t_{1}}) = \eta_{t_{2}}^{v} $ for some $ v = 1, \ldots ,p-1 $, since $ \eta_{t_{1}} $ has order $ p $. Taking the $ G $ action into account, we have
\[ \begin{array}{rll}
& \theta( \,^{\tau} \eta_{t_{1}} ) = \theta( \eta_{t_{1}}^{g^{-t_{1}}} ) = \eta_{t_{2}}^{vg^{-t_{1}}} \\
\mbox{ and } & \,^{\tau} \theta( \eta_{t_{1}} ) = \,^{\tau} \eta_{t_{2}}^{v} = \eta_{t_{2}}^{vg^{-t_{2}}}.
\end{array} \]
Since $ v \not \equiv 0 \pmod{p} $, this implies that $ g^{-t_{1}} \equiv g^{-t_{2}} \pmod{p} $, and since $ g $ has order $ q $ modulo $ p $ this implies that $ t_{1}=t_{2} $.

Finally note that since $ G $ is abelian the notions of $ \lambda $-point and $ \rho $-point coincide. The subgroups of the form $ N_{t} $ have $ p $ $ \rho $-points, namely the elements $ \eta^{i} $. The subgroups of the form $ N_{t}^{opp} $ have $ q $ $ \rho $-points, namely the elements $ (\pi')^{j} $. By Proposition \ref{prop_rho_points_isomorphism} no subgroup of the form $ N_{t} $ can be $ G $-isomorphic to a subgroup of the form $ N_{t}^{opp} $. Therefore the $ G $-isomorphisms amongst these subgroups are as in the statement of the proposition.
\end{proof}

\noindent  \textbf{The case $\mathbf{G \cong M, \; N \cong C: }$} 
Up to isomorphism there is a unique brace with these properties, which is constructed in  \cite[part (ii) of the first bullet point of the main theorem]{AB20}. In our notation this takes the form $ \B = (B,\cdot,\circ) $, where 
\[ \begin{array}{lll}
\sigma^{i}\tau^{j} \cdot \sigma^{k}\tau^{\ell} & = & \sigma^{i+k} \tau^{j+\ell} \\
\sigma^{i}\tau^{j} \circ \sigma^{k}\tau^{\ell} & = & \sigma^{i+kg^j} \tau^{j+\ell}.
\end{array} \]
The regular $ G $-stable subgroup of $ \Perm(G) $ obtained from the dot operation in $ \B $ is $ N = \langle \eta, \pi \rangle $, where
\[ \begin{array}{l}
\eta(\sigma^{k}\tau^{\ell}) = \sigma \cdot \sigma^{k}\tau^{\ell} = \sigma^{k+1}\tau^{\ell} \\
\pi(\sigma^{k}\tau^{\ell}) = \tau \cdot \sigma^{k}\tau^{\ell} = \sigma^{k}\tau^{\ell+1}.
\end{array} \]
We determine all of the regular $ G $-stable regular subgroups of $ \Perm(G) $ that are brace equivalent to $ N $.  The automorphism $ \phi $ respects $ \cdot $, but the automorphism $ \psi $ does not; thus the subgroups $ N_{s} = \psi^{-s} N \psi^{s} $, with $ 0, \leq s \leq p-1 $, form a family of $ p $ regular $G$-stable subgroups of $\Perm(G)$ that are isomorphic to $ N $. This is the family described in \cite[Lemma 4.1, Equation (4.3)]{By04}, and accounts for all of the $ G $-stable regular subgroups of $ \Perm(G) $ that are isomorphic to $ C $. 

\begin{proposition} \label{prop_N_c_rho_conjugate}
The subgroups $ N_{s} $ are $ \rho $-conjugate and mutually $ G $-isomorphic. 
\end{proposition}
\begin{proof}
Let $ C(\sigma)$ be the inner automorphism of $ G $ arising from $ \sigma $. Then $ C(\sigma)(\sigma)=\sigma $ and $ C(\sigma)(\tau) = \sigma^{1-g}\tau $. It follows that for each $ m \in \mathbb{N} $ we have $ C(\sigma)^{m}(\tau) = \sigma^{(1-g)m}\tau $. Since $ 1-g $ is coprime to $ p $, there exists $ m \in \mathbb{N} $ such that $ C(\sigma)^{m}(\tau) = \sigma\tau $, and hence $ \psi $ is an inner automorphism of $ G $. Now Proposition \ref{prop_inner_rho_conjugate} implies that the subgroups $ N_{s} $ are $ \rho $-conjugate and mutually $ G $-isomorphic. 
\end{proof}

\noindent  \textbf{The case} $ \mathbf{G \cong N \cong M: } $
There are two distinguished braces with these properties and two further families, each of size $ q-2 $. The distinguished braces are the trivial brace for $ M $ and the almost trivial brace for $ M $ (which is isomorphic to the opposite of the trivial brace). Assuming $ q > 2 $, the first family consists of the braces $ A_{t} $ (for $ 2 \leq t \leq q-1 $) constructed in  \cite[part (iii) of the second bullet point of the main theorem]{AB20}. (We have already seen that the brace $ A_{q} $ of loc. cit. has cyclic circle group.) We present these as $ \B_{t} = (B,\cdot,\circ) $ where 
\[ \begin{array}{lll}
\sigma^{i}\tau^{j} \cdot \sigma^{k}\tau^{\ell} & = & \sigma^{i+kg^{jt}} \tau^{j+\ell} \\
\sigma^{i}\tau^{j} \circ \sigma^{k}\tau^{\ell} & = & \sigma^{i+kg^{j}} \tau^{j+\ell}
\end{array} \]
To reconcile this presentation with that give in loc. cit. we apply the map $ \sigma^{i}\tau^{j} \mapsto \sigma^{i}\tau^{jt^{-1}} $.

The second family consists of the braces $ A_{t} $ (for $ 2 \leq t \leq q-1 $) constructed in  \cite[part (iv) of the second bullet point of the main theorem]{AB20}. (We have already seen that the brace $ A_{q} $ of loc. cit. is isomorphic to the almost trivial brace for $ M $.)  We obtain these braces by taking the opposites to the braces $ \B_{t} $ above: we have $ \B_{t}^{opp} = (B,\cdot',\circ) $, where
\[ \begin{array}{lll}
\sigma^{i}\tau^{j} \cdot' \sigma^{k}\tau^{\ell} & = & \sigma^{k+ig^{\ell t}} \tau^{j+\ell} \\
\sigma^{i}\tau^{j} \circ \sigma^{k}\tau^{\ell} & = & \sigma^{i+kg^{j}} \tau^{j+\ell}
\end{array} \]
To reconcile this presentation with that give in loc. cit. we apply the map  $ \sigma^{i}\tau^{j} \mapsto \sigma^{i}\tau^{jt} $.

We determine all of the regular $ G $-stable regular subgroups of $ \Perm(G) $ that yield each of these braces. We know that $\lambda(G)$ yields the trivial brace on $ G $ and that $\rho(G)$ yields the almost trivial brace on $ G $, and that each of these in a brace class by itself.

Assuming $ q > 2 $, the regular $ G $-stable subgroup of $ \Perm(G) $ obtained from the dot operation in $ \B_{t} $ is $ N_{t} = \langle \eta, \pi_{t} \rangle $, where
\[ \begin{array}{l}
\eta(\sigma^{k}\tau^{\ell}) = \sigma \cdot \sigma^{k}\tau^{\ell} = \sigma^{k+1}\tau^{\ell} \\
\pi_{t}(\sigma^{k}\tau^{\ell}) = \tau \cdot \sigma^{k}\tau^{\ell} = \sigma^{kg^{t}}\tau^{\ell+1}.
\end{array} \]
Now considering automorphisms of $ G $ we see that in all cases the automorphism $ \phi $ respects $ \cdot $, but the automorphism $ \psi $ does not; we write $ N_{t,u} = \psi^{-u} N_{t} \psi^{u} $ for $ 0 \leq u \leq p-1 $. Allowing $ t,u $ to vary we obtain a family of $ p(q-2) $ regular $G$-stable subgroups of $\Perm(G)$ that are isomorphic to $ N $. We note that these subgroups have $ p $ $ \lambda $-points, namely the elements $ \eta^{i} $; it follows that these subgroups coincide with the family described in \cite[Lemma 5.2, Equation (5.7)]{By04}. 

The $ G $-stable regular subgroup of $ \Perm(G) $ obtained from the dot operation in $ \B^{opp}_{t} $ is $ N^{opp}_{t} = \langle \eta_{t}, \pi \rangle $, where
\[ \begin{array}{l}
\eta_{t}(\sigma^{k}\tau^{\ell}) = \sigma \cdot' \sigma^{k}\tau^{\ell} = \sigma^{k+g^{\ell t}}\tau^{\ell} \\
\pi(\sigma^{k}\tau^{\ell}) = \tau \cdot' \sigma^{k}\tau^{\ell} = \sigma^{k}\tau^{\ell+1}.
\end{array} \]
As above we find that $ \phi $ respects $ \cdot' $, but $ \psi $ does not; we write $ N^{opp}_{t,u} = \psi^{-u} N^{opp}_{t} \psi^{u} $ for $ 0 \leq u \leq p-1 $, and obtain a family of $ p(q-2) $ regular $G$-stable subgroups of $\Perm(G)$ that are isomorphic to $ N $. We note that these subgroups have $ q $ $ \rho $-points, namely the elements $ \pi^{j} $; it follows that these subgroups coincide with the family described in \cite[Lemma 5.4, Equation (5.12)]{By04}. Together with the subgroups $ \rho(G) $ and $ \lambda(G) $, the subgroups of the form $ N_{t,u} $ and $ N^{opp}_{t,u} $ account for all of the $ G $-stable regular subgroups of $ \Perm(G) $ that are isomorphic to $ N $. 

\begin{proposition}
The subgroups $ N_{t,u} $ are mutually $ G $-isomorphic. The $ G $-isomorphism classes amongst the subgroups $ N^{opp}_{t,u} $ are determined by $ t $. No subgroup of the form $ N_{t,u} $ is $ G $-isomorphic to a subgroup of the form $ N^{opp}_{t,u} $.
\end{proposition}
\begin{proof}
To establish the first claim we show that the subgroups $ N_{t,u} $ arise via abelian fixed point free endomorphisms of $ G $ (see Section \ref{sec_fpf}). For fixed $ t $ consider the subgroup $ N_{t} = \langle \eta, \pi_{t} \rangle $ obtained from the dot operation in $ \B_{t} $. It is clear that $ \eta = \lambda(\sigma) $, where $ \lambda $ denotes the left regular representation of $ G $. Similarly, letting $ r $ denote the inverse of $ t $ modulo $ q $, we have
\begin{eqnarray*}
\pi_{t}^{r}(\sigma^{k}\tau^{\ell}) & = & \sigma^{kg^{rt}}\tau^{\ell+r} \\
& = &  \sigma^{kg}\tau^{\ell+r} \\
& = & \tau \sigma^{k}\tau^{\ell} \tau^{r-1} \\
& = & \lambda(\tau) \rho(\tau^{1-r}) (\sigma^{k}\tau^{\ell}),
\end{eqnarray*}
where $ \rho $ denotes the right regular representation of $ G $. Hence $ \pi_{t}^{r} = \lambda_{\circ}(\tau) \rho_{\circ}(\tau^{1-r}) $. Now consider the function  $ \psi: G \rightarrow G $ defined by $ \psi(\sigma^{k}\tau^{\ell}) = \tau^{\ell(1-r)} $. This is an abelian abelian fixed-point-free endomorphism (see \cite{KST}), and the corresponding regular $ G $-stable subgroup $ N_{\psi} $ of $ \Perm(G) $ is generated by $ \lambda_{\circ}(\sigma) $ and $ \lambda(\tau)\rho(\psi(\tau)) = \pi_{t}^{r} $. Thus $ N_{\psi} = N_{t} $, and applying Corollary \ref{cor_fpf_G_iso} we see that $ N_{t} $ is $ G $-isomorphic to $ \lambda(G) $. Since this holds for all $ t $, the subgroups $ N_{t} $ are mutually $ G $-isomorphic. Finally, applying Proposition \ref{prop_fpf_subgroups} we see that all of the subgroups $ N_{t,u} $ arise via abelian fixed point free endomorphisms, and so are mutually $ G $-isomorphic. 

Next we consider the subgroups $ N^{opp}_{t,u} $. For each fixed $ t $ we have $ N^{opp}_{t,u} = \psi^{-u} N^{opp}_{t} \psi^{u} $ for $ 0 \leq u \leq p-1 $, and we saw in the proof of Proposition \ref{prop_N_c_rho_conjugate} that $ \psi $ is an inner automorphism of $ G $; hence by Proposition \ref{prop_inner_rho_conjugate} the subgroups $ N^{opp}_{t,u} $ are all $ \rho $-conjugate and $ G $-isomorphic. To show that there are no further $ G $-isomorphisms within this family, it is sufficient to show that no two of the subgroups $ N^{opp}_{t} $ are $ G $-isomorphic. We calculate the action of $ \tau $ on $ \eta_{t} $:
\begin{eqnarray*}
\,^{\tau}\eta_{t}(\sigma^{k}\tau^{\ell}) & = & \tau \eta_{t}( \tau^{-1} \sigma^{k} \tau^{\ell} ) \\
& = & \tau \eta_{t}( \sigma^{kg^{-1}} \tau^{\ell-1} ) \\
& = & \tau ( \sigma^{kg^{-1}g^{(\ell-1)t}} \tau^{\ell-1} ) \\
& = & \sigma^{kg^{(\ell-1)t}} \tau^{\ell} ) \\
& = & \eta_{t}^{g^{-t}}(\sigma^{k}\tau^{\ell}). 
\end{eqnarray*}
Hence $ \,^{\tau}\eta_{t} = \eta_{t}^{g^{-t}} $. Now if $ \theta: N^{opp}_{t_{1}} \rightarrow N^{opp}_{t_{2}} $ is a $ G $-isomorphism then $ \theta(\eta_{t_{1}}) = \eta_{t_{2}}^{v} $ for some $ v = 1, \ldots ,p-1 $, since $ \eta_{t_{1}} $ has order $ p $. Taking the $ G $ action into account, we have
\[ \begin{array}{rll}
& \theta( \,^{\tau} \eta_{t_{1}} ) = \theta( \eta_{t_{1}}^{g^{-t_{1}}} ) = \eta_{t_{2}}^{vg^{-t_{1}}} \\
\mbox{ and } & \,^{\tau} \theta( \eta_{t_{1}} ) = \,^{\tau} \eta_{t_{2}}^{v} = \eta_{t_{2}}^{vg^{-t_{2}}}.
\end{array} \]
Since $ v \not \equiv 0 \pmod{p} $, this implies that $ g^{-t_{1}} \equiv g^{-t_{2}} \pmod{p} $, and since $ g $ has order $ q $ modulo $ p $ this implies that $ t_{1}=t_{2} $. Therefore the $ G $-isomorphisms amongst the subgroups $ N^{opp}_{t,u} $ are as described in the statement of the proposition. 

The final claim follows from Proposition \ref{prop_rho_points_isomorphism} since each of the subgroups $ N^{opp}_{s,t} $ has $ q $ $ \rho $-points, whereas each of the subgroups $ N_{s,t} $ has none. 
\end{proof}

\section*{Acknowledgements}

Funding: The second author was supported in part by the London Mathematical Society [Grant No. \# 41847].
The first author would like to thank Keele University for its hospitality during the development of this paper.

\end{document}